\theoremstyle{plain}
\newtheorem{theorem}{Theorem}[section]
\newtheorem{lemma}[theorem]{Lemma}
\newtheorem{proposition}[theorem]{Proposition}
\newtheorem{remark}[theorem]{Remark}
\newtheoremstyle{assumpstyle}
{\topsep}{\topsep}            % space above/below
{\normalfont}      % body font
{}              % indent
{\bfseries}     % head font
{.}             % punctuation
{ }             % space after head
{\thmname{#1}\thmnumber{ #2}\thmnote{[#3]}}
\theoremstyle{assumpstyle}
\newtheorem{assumption}{}
\newtheorem{assumptionprimeP}{}[assumption]
\newtheorem{assumptionsecondP}{}[assumption]
\DeclareMathOperator*{\argmin}{arg\,min}
\newcommand{\one}{\mathds{1}}
\newcommand{\E}{\mathbb{E}}
\newcommand{\id}[1]{\ensuremath{\mathds{1}_{#1}}}
\renewcommand{\P}{\mathbb{P}}
\newcommand{\ve}{\varepsilon}
\newcommand{\revar}{\color{black}}
\newcommand{\revarn}{\color{black}{}}
\newcommand{\revK}{\color{black}{}}
\DeclarePairedDelimiter{\abs}{\lvert}{\rvert}
\DeclarePairedDelimiter{\norm}{\lVert}{\rVert}
\title{Drift estimation for rough processes under small noise asymptotic : trajectory fitting method}
\author[1]{Arnaud Gloter}
\author[2,3]{Nakahiro Yoshida}
\affil[1]{Laboratoire de Math\'ematiques et Mod\'elisation d'Evry, Universit\'e d'Evry
	\footnote{
		Laboratoire de Math\'ematiques et Mod\'elisation d'Evry, CNRS, Univ Evry, 
		Universit\'e Paris-Saclay, 91037, Evry, France. e-mail: arnaud.gloter@univ-evry.fr}
}
\affil[2]{Graduate School of Mathematical Sciences, University of Tokyo
	\footnote{Graduate School of Mathematical Sciences, University of Tokyo: 3-8-1 Komaba, Meguro-ku, Tokyo 153-8914, Japan. e-mail: nakahiro@ms.u-tokyo.ac.jp}
}
\affil[3]{Japan Science and Technology Agency CREST
	%%\footnote{}
}
\begin{document}
\maketitle
\footnote{
	This work was in part supported by 
	Japan Science and Technology Agency CREST JPMJCR2115; %JPMJCR14D7; 
	Japan Society for the Promotion of Science Grants-in-Aid for Scientific Research 
	No. 23H03354 (Scientific Research);  
	%Nos. 17H01702, 23H03354 (Scientific Research);  
	%No. 24340015 (Scientific Research), 
	%Nos. 24650148 and 
	%No. 26540011 (Challenging Exploratory Research); \koko
	%the Global COE program ``The Research and Training Center for New Development in Mathematics'' of the Graduate School of Mathematical Sciences, University of Tokyo; 
	%NS Solutions Corporation; 
	and by a Cooperative Research Program of the Institute of Statistical Mathematics. %%
}
\begin{abstract}
We consider a process $X^\ve$ {\revarn that solves a} stochastic Volterra equation with an unknown parameter $\theta^\star$ in the drift function. The Volterra kernel is singular, {\revK and 
	includes as an example, $K_0(u)=c
u^{\alpha-1/2} \id{u>0}$ with $\alpha \in (0,1/2)$.}
It is assumed that the diffusion coefficient is proportional to $\ve \to 0$. From an observation of the path $(X^\ve_s)_{s\in[0,T]}$, we construct a Trajectory Fitting Estimator, which is shown to be consistent and asymptotically normal. We also specify identifiability conditions insuring the $L^p$ convergence of the estimator.
\end{abstract}

\section{Introduction}

Volterra equations have been the subject of many recent works as they found applications in several fields, as physics, mathematical finance, modeling in life science (e.g. see
\cite{bakerPerspectiveNumericalTreatment2000a} and references therein for the deterministic case or \cite{abijaberAffineVolterraProcesses2019,barndorff-nielsenModellingEnergySpot2013,eleuchCharacteristicFunctionRough2019}
in the stochastic situation).

The probabilistic properties of the stochastic Volterra models driven by Brownian motions are first studied in the seminal papers
\cite{bergerVolterraEquationsIto1980, bergerVolterraEquationsIto1980b}. The model is latter extended in numerous contexts, by considering more general {\revarn semimartingales} \cite{protterVolterraEquationsDriven1985},  singular kernels
\cite{cochranStochasticVolterraEquations1995}, non Lipschitz  coefficients \cite{promelStochasticVolterraEquations2023,wangExistenceUniquenessSolutions2008},
infinite dimensional settings
\cite{zhangStochasticVolterraEquations2010}, or {\revarn connections} with rough path theory \cite{promelParacontrolledDistributionApproach2021}.

The situation of singular kernels has attracted a lot of attention as it 
{\revarn leads} to so-called ``rough'' processes where the H\"older smoothness $\alpha$ is smaller than the Brownian regularity $\alpha=1/2$. Such models are shown to be useful for the purpose of modelling volatility processes.
The roughness condition $\alpha<1/2$ for the volatility process is empirically supported by several studies (\cite{fukasawaConsistentEstimationFractional2022, gatheralVolatilityRough2018}), and these
rough stochastic volatility models have been widely studied recently
(see \cite{gatheralQuadraticRoughHeston2020, eleuchRougheningHeston2018}).

Statistical questions are addressed in such models. In particular, the stochastic volatility models with rough {\revarn volatility} is considered in \cite{chongStatisticalInferenceRough2024a,chongStatisticalInferenceRough2024b}, where the Hurst index $\alpha$ is estimated from observations of the price process. {\revarn However, few results are available  for the estimation of the other parameters of the model. In stochastic volatility models, the statistical procedures typically proceed in two steps : first, the volatility process is reconstructed from the price process; second, the dynamic of the volatility is inferred based on this reconstruction.}

In this work, we consider a 
%different 
{\revarn  statistical} problem in a framework of
{\revarn a} rough process solution of Volterra equation with singular kernels 
{\revK $K$,}
\begin{equation*}%\label{Eq : Volterra SDE}
	X^\varepsilon_t=X^\varepsilon_0+\ve \int_0^t K(t-s)a(X^\ve _s) dB_s
	+ \int_0^t K(t-s) b(X_s^\ve,\theta^\star) ds.
\end{equation*}
{\revK A typical example of singular kernel is $K(u)=K_0(u):=\frac{1}{\Gamma(\alpha+1/2)}
u^{\alpha-1/2} \id{u>0}$.}
Our setting is such that the process is close to a deterministic Volterra model as we assume a small noise asymptotic $\ve \to 0$. 
The objective is to estimate a parameter $\theta$ in the drift function from a continuous observation of $X^\ve$ {\revarn over a fixed time} interval $[0,T]$. In the framework of semimartingale models, the small noise asymptotic has already been subject of many studies (see \cite{genon-catalotParametricInferenceSmall2021,	guyParametricInferenceDiscretely2014, hongweiParameterEstimationClass2010,kutoyantsIdentificationDynamicalSystems1994,
	 nakajimaMaximumLikelihoodType2025b,sorensenSmalldiffusionAsymptoticsDiscretely2003b,uchidaInformationCriteriaSmall2004}).
	{\revarn In particular, \cite{kutoyantsIdentificationDynamicalSystems1994} provides a  statistical study of drift estimation for bounded kernels with a constant diffusion coefficient.}

From a continuous observation of $X^\ve$, it is possible to consider the MLE estimator of the model. In the context of a Volterra Ornstein-Uhlenbeck model, this method is used in \cite{zunigaVolterraProcessesApplications2021} in order to estimate the drift parameter, when $T \to \infty$. The consistency of the estimator is proved in \cite{zunigaVolterraProcessesApplications2021}.
In practice, the MLE methods {\revarn require the} approximation of stochastic integrals, which can produce some difficulties {\revarn in applications to real data, when continuous observation is not available}.
%and may  and might.
For this reason, in this paper, we {\revarn focus on the trajectory fitting estimator as considered in \cite{kutoyantsStatisticalInferenceErgodic2004}, which is known to be less sensitive to discretization issues.} 
%to circumvent the previous difficulty. 
{\revarn The study of MLE methods and of the theoretical issues concerning their approximation form
high frequency data is addressed in \cite{gloterDriftEstimationRough2025a}.}

We prove that it is possible to estimate consistently, with convergence in $L^p$, the drift parameter $\theta$ under explicit sufficient identifiability conditions on the model. We also {\revarn establish} the asymptotic normality of the estimator with rate $\ve^{-1}$,
%\begin{equation*} %\label{Eq : asymptotic est proba}
%	\frac{1}{\ve}(\hat{\theta}_\ve-\theta^\star) \xrightarrow[\mathbb{P}]{n\to\infty}
%	- \mathcal{J}(\theta^\star)^{-1} \times \big(\dot{Q}_0^{1}+\dot{Q}_0^{2} \big)^*
%\end{equation*}
 and give some explicit expression for the {\revarn Gaussian} limiting variable as {\revarn a} function of $X^0$ and $\partial_\ve X^\ve \mid_{\ve=0}$ (see Section \ref{S: Central}).

The outline of the paper is the following. In Section
\ref{S: Model} we give the assumptions on the model and introduce the estimation procedure. We also discuss the validity of the identifiability condition.
{\revarn Consistency is established} in Section \ref{S: Consistency}, while the asymptotic normality is addressed in Section \ref{S: Central}.
{\revar In Section \ref{S : num sim}, we provide a Monte Carlo study to assess the quality of the estimator in practice.}
 The Section \ref{S: Appendix} contains a technical result on the dependence of the Volterra equation with respect to the parameter.

% Remark that at the notable exception of \cite{zunigaVolterraProcessesApplications}, it seems that few results are existing in context of time $T \to \infty$.
%  the author study the linear case

\section{Model and assumptions} \label{S: Model}
We let $(\Omega,(\mathcal{F}_t)_{t\ge 0}, \P)$ be a filtered probability space satisfying the usual conditions. We consider on this space an $r$-dimensional standard Brownian motion $B$. 

On this space, we define $(X^\ve_t)_{t\in [0,T]}$ as a $d$ dimensional process, solution on $[0,T]$ of the stochastic differential equation
\begin{equation}\label{Eq : Volterra SDE}
X^\varepsilon_t=X^\varepsilon_0+\ve \int_0^t K(t-s)a(X^\ve _s) dB_s
+ \int_0^t K(t-s) b(X_s^\ve,\theta^\star) ds.
\end{equation}
Here, $X_0^\ve =x_0 \in \mathbb{R}^d$, 
$a : \mathbb{R}^d \to \mathbb{R}^d \otimes\mathbb{R}^r$, and 
$b : \mathbb{R}^d \times \Theta \to \mathbb{R}^d$. Here, we use the tensor product $ \mathbb{R}^d \otimes\mathbb{R}^r$ to represent the set of matrices of size $d\times r$. 
%\arnaud{sometimes the order of the tensor product is swapped}.

The parameter $\theta^\star$ belongs to the compact set $\Theta \subset \mathbb{R}^{d_{\Theta}}$. 

{\revK 
The kernel $K : (0,\infty) \mapsto \mathbb{R}$ is of rough type. Let us denote $K_0(u)=\frac{1}{\Gamma(\alpha+1/2)}
u^{\alpha-1/2} \id{u>0}$ with $\alpha \in (0,1/2)$. We assume that $K$ is $\mathcal{C}^1$ and its behavior near $0$ can be compared to the one of $K_0$ :
\begin{equation} \label{Eq : comp K K0}
	\forall u\in(0,T],\quad \abs*{K(u)} \le c K_0(u), \quad  \abs*{K'(u)} \le c \abs*{K'_0(u)},
\end{equation}
for some constant $c>0$ depending on $T$.
}
%The kernel $K$ is of rough type 
%\begin{equation*}
%	K(u)=\frac{1}{\Gamma(\alpha+1/2)}
%	u^{\alpha-1/2} \id{u>0}
%\end{equation*} with $\alpha \in (0,1/2)$. 
Noteworthy, we have
\begin{equation} \label{Eq : Kernel L1 L2 bound}
	\sup_{t \in [0,T]} \int_0^t \abs*{K(t-s)}ds \le  c,\quad \sup_{t \in [0,T]} \int_0^tK(t-s)^2ds \le  c
\end{equation}
for some constant {\revarn $c$ depending on $T$}.

We assume $\ve\in [0,1]$ and consider the asymptotic framework $\varepsilon\to0$.

We introduce the following assumptions on the coefficients of the Volterra equation.
\begin{assumption} \label{Ass : global Lip} The functions $a$ and $b$ are continuous and
	there exists some $C>0$, such that for all $(x,x')\in \mathbb{R}^d$, $\theta \in \Theta$
	\begin{equation*}
		\abs{a(x)-a(x')} \le C \abs{x-x'},\quad
		\abs{b(x,\theta)-b(x',\theta)} \le C \abs{x-x'}.
	\end{equation*}
\end{assumption}
Let us stress that for $x\in \mathbb{R}^d$, we denote by $\abs{x}$ the Euclidean norm of $x$ given by $\abs{x}=\sqrt{x^* x}$, while for $m \in \mathbb{R}^{d}\otimes\mathbb{R}^r$ the quantity $\abs{m}$ is the operator norm of the $d\times r$ matrix $m$ {\revarn with respect to the Euclidian norm}.

From application of Theorem {\revarn 1.1} in \cite{wangExistenceUniquenessSolutions2008}, we know that under \ref{Ass : global Lip} {\revK and 
	\eqref{Eq : comp K K0},} the equation \eqref{Eq : Volterra SDE} admits a unique progressively measurable process $(X_t^\ve)_t$ as solution. 
Moreover, the solution admits finite moment{\revarn s} of any order by Lemma 2.2 of \cite{wangExistenceUniquenessSolutions2008},
\begin{equation} \label{Eq : moment Volterra}
	\forall p \ge1, \quad \sup_{t \in [0,T]} \E[\abs{X^\ve_t}^p] \le {c(p).}	
\end{equation}
The constant $c(p)$ depends on $p$, {\revarn $T$,} the coefficients of the SDE \eqref{Eq : Volterra SDE} and on the kernel $K$. As $\varepsilon \in [0,1]$, the constant 
$c(p)$ in \eqref{Eq : moment Volterra} is uniform with respect to $\varepsilon$.

{\revarn
	To get statistical results, we need more regularity than \ref{Ass : global Lip} on the coefficients of the model \eqref{Eq : Volterra SDE}. 
	With that regard, we introduce some notations.
For $E$ a finite dimensional space, and $k_1 \ge0, k_2\ge0$ integers, we denote by $\mathcal{C}^{k_1,k_2}_P(\mathbb{R}^d\times\overset{\circ}{\Theta},E)$ 
the set of functions $f : \mathbb{R}^d\times\overset{\circ}{\Theta} \to E$ such that
for all $0 \le i \le k_1$ and all $ 0 \le j \le k_2$, the partial derivatives $(x,\theta)\mapsto 
\frac{\partial^{i} f}{\partial x^i }$ 
and 
$(x,\theta)\mapsto \frac{\partial^{j} f}{\partial \theta^j }$
are well-defined and continuous on $\mathbb{R}^d \times \overset{\circ}{\Theta}$, and satisfy,
\begin{equation*}
	\sup_{\theta \in \overset{\circ}{\Theta}} 
	\norm*{
		\frac{\partial^{i} f(x,\theta)}{\partial x^i } }_E +
	\norm*{
		\frac{\partial^{j} f(x,\theta)}{\partial \theta^j } }_E \le c(1+|x|^c)
\end{equation*}
for some constant $c>0$. 
For $l\ge0$, we also denote $\mathcal{C}^{l}_P(\mathbb{R}^d\times\overset{\circ}{\Theta},E)$ the set of functions $f : \mathbb{R}^d\times\overset{\circ}{\Theta} \to E$ such that all partial derivatives
$(x,\theta)\mapsto 
\frac{\partial^{i+j} f}{\partial x^i \partial \theta^j }$ are well-defined for any $i+j \le l$ and continuous in 
$\mathbb{R}^d \times \overset{\circ}{\Theta}$, and satisfy,
$	\sup_{\theta \in \overset{\circ}{\Theta}} \norm{
	\frac{\partial^{i+j} f(x,\theta)}{\partial x^i \partial \theta^j } }_E \le c(1+|x|^c).$
Here  	$\frac{\partial^{i+j} f(x,\theta)}{\partial x^i \partial \theta^j } $
is any partial derivative $\frac{\partial^{i+j} f(x,\theta)}{\partial x_{u_1}\dots\partial x_{u_i} \partial \theta_{v_1}\dots\partial \theta_{v_j} } $ where
$(u_1,\dots,u_i) \in \{1,\dots,d\}^i$ and $(v_1,\dots,v_j) \in \{1,\dots,d_\Theta\}^j$.
For $f : \Theta \mapsto E$, we denote $\nabla_\theta f$ the matrix 
$\begin{bmatrix}
		\frac{\partial{f}}{\partial \theta_1}, \dots,\frac{\partial{f}}{\partial \theta_{d_\Theta}}	
	\end{bmatrix} 
	$ where we assimilate $\frac{\partial{f}}{\partial \theta_j}$ with its representation on some basis of $E$.

Lastly, we recall the definition of the H\"older norm. For a continuous function $f : [0,T]\to \mathbb{R}$ and $\beta\in[0,1)$, we define
$\norm{f}_{\mathcal{H}^\beta}=\sup_{s\in[0,T]}\abs*{f(s)}+\sup_{0\le s\neq t \le T}
\abs*{\frac{f(t)-f(s)}{(t-s)^\beta}}$. If $f \in \mathcal{C}^{k}([0,T],\mathbb{R})$, and $s =k+\beta$ with $k\in\mathbb{N}$ and $\beta\in[0,1)$, we
let $\norm{f}_{\mathcal{H}^s}=\sum_{l=0}^{k-1} \sup_{s\in[0,T]}  
\abs*{f^{(l)}(s)} + \norm{f^{(k)}}_{\mathcal{H}^\beta}$.
}
%It is also known from Theorem 1.3 in \cite{wangExistenceUniquenessSolutions2008} that the sample paths $t \mapsto X^\ve_t$ are a.s. $\delta$-H\"older for any $\delta \in (0,\alpha/2)$.

% Inspection of the proof of Theorem 1.3 in  \cite{wangExistenceUniquenessSolutions2008} gives that for $q \ge 1$ and $\alpha' \in (0,\alpha)$
%\begin{equation} \label{Eq : moment increment Volterra}
%	\E\left( \abs*{X^\ve_t-X^\ve_s}^q\right) \le C \abs*{t-s}^{q \alpha'}
%\end{equation}
%where the constant $C$ depends on the coefficient of the S.D.E.

{\revK Using the two conditions in \eqref{Eq : comp K K0}, 	
	we can prove for $t\in[0,T]$ and $\delta\in(0,1)$, that $\int_0^t \abs{K(t+\delta-s)-K(t-s)} ds \le c \delta^{\alpha+1/2}$, $\int_0^t \abs{K(t+\delta-s)-K(t-s)}^2 ds \le c \delta^{2\alpha}$, $\int_{t}^{t+\delta} \abs{K(t+\delta-s)}ds \le c \delta^{\alpha+1/2}$, $\int_{t}^{t+\delta} \abs{K(t+\delta-s)}^2ds \le c \delta^{2\alpha}$ with some constant $c>0$. 
	We can apply Proposition 4.1 in %\cite{richardDiscretetimeSimulationStochastic2022a}, 
\cite{richardDiscretetimeSimulationStochastic2021}, and we deduce} that for $q \ge 1$ 
%and $\alpha' \in (0,\alpha)$
\begin{equation} \label{Eq : moment increment Volterra}
	\E\left( \abs*{X^\ve_t-X^\ve_s}^q\right) \le  C \abs*{t-s}^{q \alpha}, {\revarn \quad \forall t,s \in [0,T],}
\end{equation}
where the constant {\revarn $C$ depends on $T$, the coefficient of the S.D.E., $x_0$ and $q$.} %and on $\alpha'$.

Using Kolmogorov's continuity criteria, it implies that a version of the process is such that the sample paths $t \mapsto X^\ve_t$ are a.s. $\alpha'$-H\"older for any $\alpha' \in (0,\alpha)$.

We assume that the set $\Theta$ is such that the following Sobolev embedding holds true. For $f \in \mathcal{C}^{1}(\overset{\circ}{\Theta})$, $p > d_\Theta$, we have
$$
\sup_{\theta \in \Theta} \abs{f(\theta)} \le c [\norm{f}_{{\revar \mathbf{L}}^p(\Theta)}+\norm{\nabla f}_{{\revar \mathbf{L}}^p(\overset{\circ}{\Theta})}].
$$
{\revarn This Sobolev embedding holds true provided that the compact parameter space
$\Theta$ satisfies the cone property, which is guaranteed, for example, when 
its boundary is %$\partial \Theta$ 
 Lipschitz (see  Section 1.4.5 in \cite{mazyaSobolevSpacesApplications2011}).}

The true value of the parameter $\theta=\theta^\star$ is unknown. To estimate it, we introduce an estimator based on a trajectory fitting procedure (see  \cite{kutoyantsStatisticalInferenceErgodic2004}). Hence, we introduce the following deterministic trajectories : for $\theta \in \Theta$, we let $(X_t^0(\theta))_{t\in [0,T]}$ be the solution of 
\begin{equation} \label{Eq : Volterra no noise}
	X_t^0(\theta)=x_0 + \int_0^t K(t-s) b(X^0_s(\theta),\theta)ds.
\end{equation}
Under Assumption \ref{Ass : global Lip}, the solution is well defined, and from compactness of $\Theta$, and continuity of $b$, we have 
\begin{equation} \label{Eq : unif bound on X^0}
	\sup_{\theta\in\Theta} \sup_{t\in[0,T]} \abs{X^0_t(\theta)} \le {\revarn c},
\end{equation}
{\revar for $c$ some constant depending on $T$.}

The result \eqref{Eq : moment increment Volterra} also applies to the deterministic solution $X^0(\theta)$. It {\revar gives} that $\norm{X^{0}(\theta)}_{\mathcal{H}^{\alpha}} \le C$ %{\revar for $\alpha'<\alpha$,}
 where the constant $C$ is independent of $\theta\in \Theta$ by compactness of $\Theta$.
%The result \eqref{Eq : moment increment Volterra} also applies to the deterministic solution $X^0(\theta)$. It {\revar gives} that $\norm{X^{0}(\theta)}_{\mathcal{H}^{\alpha'}} \le C$ {\revar for $\alpha'<\alpha$,} where the constant $C$ is independent of $\theta\in \Theta$ by compactness of $\Theta$.
% and  $\norm{\cdot}_{\mathcal{H}^{\alpha'}}$  is the $\alpha'$-H\"older norm on $[0,T]$. 

From \eqref{Eq : Volterra SDE} and \eqref{Eq : Volterra no noise}, we have $X^0=X^0(\theta^\star)$.
We define for all $\theta\in \Theta$,
\begin{equation} \label{Eq : contrast TFE}
	Q_\ve(\theta)=\int_0^T \abs{X^\ve_t-X^0_t(\theta)}^2dt.
\end{equation}
We have $Q_0(\theta)=\int_0^T \abs{X^0_t-X^0_t(\theta)}^2dt=\int_0^T \abs{X^0_t(\theta^\star)-X^0_t(\theta)}^2dt$. In turn, $Q_0(\theta^\star)=0$.
We define the TFE as 
\begin{equation*}
	\hat{\theta}_\ve {\revarn \in } \argmin_{\theta \in \Theta} Q_\ve(\theta).
\end{equation*}
{\revarn We now introduce the following identifiability conditions, where $\rho,\rho'>0$.
\begin{assumption} \label{Ass : identif}
	If $\theta\neq\theta^\star$, then $\int_0^T \abs*{b(X^0_t(\theta),\theta)-b(X^0_t(\theta^\star),\theta^\star)}^2 dt >0$.
\end{assumption}
%\begin{assumptionprime} \label{Ass : identif_strong}
%	There exist $c>0$, $\rho>0$ such that for all $\theta\in \Theta$
%	\begin{equation*}
%		\int_0^T \abs*{b(X^0_t(\theta),\theta)-b(X^0_t(\theta^\star),\theta^\star)}^2 dt \ge c \abs{\theta-\theta^\star}^\rho.
%	\end{equation*}
%\end{assumptionprime}
\begin{assumptionprimeP}[$\rho$] \label{Ass : identif_strong}
	There exists $c>0$ such that for all $\theta\in \Theta$
	\begin{equation*}
		\int_0^T \abs*{b(X^0_t(\theta),\theta)-b(X^0_t(\theta^\star),\theta^\star)}^2 dt \ge c \abs{\theta-\theta^\star}^\rho.
	\end{equation*}
\end{assumptionprimeP}
\begin{assumptionsecondP}[$\rho'$] \label{Ass : identif_strong_contrast}
	There exists $c'>0$ such that for all $\theta\in \Theta$,
	%\begin{equation*}
	$Q_0(\theta) \ge c' \abs{\theta-\theta^\star}^{\rho^\prime}.$
	%\end{equation*}
\end{assumptionsecondP}
}
{\revarn Assumption \ref{Ass : identif} is minimal to allow the identification of the parameter $\theta$ from the model at $\ve=0$. 
	Assumption {\revarn \ref{Ass : identif_strong}[$\rho$]} provides a quantitative version of this property.} It is  	
	for instance satisfied with $\rho=2$ as soon as $(x,\theta)
\mapsto b(x,\theta)$ is a $\mathcal{C}^1$ function and the starting point of the process allows identifiability in the sense that $\theta \mapsto \abs*{\frac{b(x_0,\theta)-b(x_0,\theta^\star)}{\theta-\theta^\star}}$ is lower bounded by some {\revarn positive} constant. 
{\revarn Assumption \ref{Ass : identif_strong_contrast}[$\rho'$] is an identifiability condition of $\theta$ from the contrast function. }
%{\color{orange} 
%	\arnaud{I am sorry, I suspect there is a pb with this (at least it is not straightforward), I suggest to remove it}
%	If this condition is not true, it is also possible to use a condition relying {\revarn on} higher derivatives with respect to $x$. For instance assume $(x,\theta)
%\mapsto b(x,\theta)$ is a $\mathcal{C}^{2}$ {\revarn function} and that
%$b(x_0,\theta)=b(x_0,\theta^\star)$ for all $\theta$, with $b(x_0,\theta^\star) \neq 0$. Then, {\revarn using} a Taylor expansion around $x_0$, it is possible to check that the condition \ref{Ass : identif_strong} is true as soon as $\inf_{\theta \in \Theta}\inf_{\abs{u}=1}
%\frac{\abs*{ [\nabla_x b(x_0,\theta)-\nabla_x b(x_0,\theta^\star)]u}}{\abs*{\theta-\theta^\star}}>0$.
%\arnaud{Actually condition with order $k$ derivatives are rather cumbersome in dimension $d$ ?`}
% function for some some $k\ge1$ and $b(x_0,\theta^\star) \neq 0$. The starting point of the process allows identifiability as soon as $\theta \mapsto \frac{\abs{\frac{\partial^k b}{\partial x^k}(x_0,\theta)-\frac{\partial^k b}{\partial x^k}(x_0,\theta^\star)}}{\abs{\theta-\theta^*}}$ is lower bounded by some non-negative constant.

We introduce some useful notations in the context of Volterra equations. We let $f \star g (t)=\int_0^t f(t-s) g(s)ds$ denote the convolution of two functions on $[0,\infty)$. It is known that the kernel $K_{\revK 0}(u)=\frac{1}{\Gamma(\alpha+1/2)}
u^{\alpha-1/2} \id{u>0}$ admits a first kind resolvent kernel $L_{\revK 0}$ with explicit expression 
$L_{\revK 0}(u)=\frac{u^{-\alpha-1/2}}{\Gamma(1/2-\alpha)} \id{u>0} $, which satisfies $L_{\revK 0}\star K_{\revK 0}(t)=K_{\revK 0}\star L_{\revK 0}(t)=1$ for all $t > 0$. {\revK We introduce the assumption that the kernel $K$ admits a first kind resolvent.
\begin{assumption}\label{Ass : kernel}
	There exists a measurable function $L : (0,\infty) \to \mathbb{R}$, which is in $\mathbf{L}^1((0,T])$ for all $T > 0$ and such that $L\star K (t)=1$ for all $t > 0$.
\end{assumption}
The existence of the first kind resolvent is discussed in \cite{gripenbergVolterraEquationsFirst1980}. By Theorem 2 in \cite{gripenbergVolterraEquationsFirst1980}, it is sufficient that $K$ is non-negative, non-increasing and satisfies $\lim_{u\to0+}K(u)=\infty$, in order for Assumption \ref{Ass : kernel} to hold.
}

\begin{lemma} \label{lem : mino Q}
	1)	Assume \ref{Ass : global Lip}, \ref{Ass : identif}, {\revK and \ref{Ass : kernel},} then $Q_0(\theta)=0$ implies $\theta=\theta^\star$.
	\\
{\revarn 2)  Assume \ref{Ass : global Lip}, \ref{Ass : identif_strong}[$\rho$] for some $\rho>0$, {\revK and \ref{Ass : kernel}}.
	  Then, \ref{Ass : identif_strong_contrast}[$\rho'$] holds true for any $\rho' \in [\frac{1+\alpha}{\alpha} \rho,\infty)$.}	
%	2)  Assume \ref{Ass : global Lip} and that \ref{Ass : identif_strong} holds with some $c>0$, $\rho>0$. Then, \ref{Ass : identif_strong_contrast} holds true with some $c'>0$ and $\rho'=$
\end{lemma}
%\begin{lemma} \label{lem : mino Q}
%1)	Assume \ref{Ass : global Lip} and \ref{Ass : identif}, then $Q_0(\theta)=0$ implies $\theta=\theta^\star$.
%\\
%2)  Assume \ref{Ass : global Lip} and \ref{Ass : identif_strong}, then there exists $c'>0$, $\rho'>0$, such that
%$Q_0(\theta) \ge c'\abs{\theta-\theta^\star}^{\rho'}$.
%\end{lemma}
\begin{proof}
	We prove the first point. Assume that $0=Q_0(\theta)=\int_0^T \abs{X^0_t(\theta^\star)-X^0_t(\theta)}^2dt$. By continuity of the solutions of the Volterra equations, it yields  $X^0_t(\theta)=X^0_t(\theta^\star)$ for all $t\in [0,T]$. In turn by \eqref{Eq : Volterra no noise}, we deduce $(K(\cdot) \star b(X_\cdot^0(\theta),\theta)) (t) = (K(\cdot) \star b(X_\cdot^0(\theta^\star),\theta^\star)) (t)$ for all $t \in [0,T]$.  Set
	$\overline{b}(t)=b(X_t^0(\theta),\theta)-b(X_t^0(\theta^\star),\theta^\star)$. %From \eqref{Eq : Volterra no noise}, 
	It is 
	\begin{align} \label{Eq : K star b bar}
		\big(K \star \overline{b} \big)(t)&=X^0_t(\theta)- X^0_t(\theta^\star) ,
		\\ &= 0, \text{ for all $t \in [0,T]$.} \nonumber
	\end{align} 
	We apply a convolution with the function $L$, which gives $\big(L \star (K \star \overline{b} )\big)(t)  = 0$. 
	%L\star(K(\cdot) \star b(\cdot,\theta^\star)) (t)$. 
	Using associativity of the convolution operator $(L\star K)\star \overline{b}=0$. 
	% (L\star K)(\cdot) \star b(\cdot,\theta^\star)(t)$. 
	Since $L\star K=\one_{\{t>0\}}$, we have 
	$\int_0^t \overline{b}(t) dt =0$ for  $t\in[0,T]$. 
	We deduce $\overline{b}(t) = 0$ for $t \in[0,T]$, and in turn $0=\int_0^T {\revarn\abs*{\overline{b}(t)}^2} dt=\int_0^T\abs*{b(X^0_t(\theta),\theta)-b(X^0_t(\theta^\star),\theta^\star)}^2dt$. By \ref{Ass : identif}, we deduce $\theta=\theta^\star$.
	
	Now, we prove the second point of the lemma. Let us assume {\revarn \ref{Ass : identif_strong}[$\rho$]} with $\rho>0$. We have by \eqref{Eq : K star b bar},
	\begin{equation*}
		Q_0(\theta)=\int_0^T\abs{X_t^0(\theta)-X_t^0(\theta^\star)}^2 dt=\int_0^T \abs{K\star \overline{b}(t)}^2 dt.
	\end{equation*}
	We denote $\overline{B}(t) =\int_0^t \overline{b}(s) ds$. As $L\star K(t)=\one_{\{t>0\}}$, we have $\overline{B}(t)=\big((L\star K)\star b\big)(t)$. Hence,
	\begin{align*}
		\norm{\overline{B}}^2_{\mathbf{L}^2([0,T])}&= \norm{(L\star K) \star b}^2_{\mathbf{L}^2([0,T])}=\norm{L\star (K \star b)}^2_{\mathbf{L}^2([0,T])}
		\\
		& \le 	\norm{L}^2_{\mathbf{L}^1([0,T])} \times \norm{K\star\overline{b}}^2_{\mathbf{L}^2([0,T])} = \norm{L}^2_{\mathbf{L}^1([0,T])} \times Q_0(\theta) ,
		%\int_0^T \overline{B}_s^2 ds =
	\end{align*}
	 where we used the Young's inequality. {\revK
	 	%As $	\norm{L}_{\mathbf{L}^1([0,T])}=\int_0^T s^{-1/2-\alpha} ds \frac{1}{ \Gamma(1/2-\alpha)}=T^{1/2-\alpha} \frac{1}{ \Gamma(3/2-\alpha)}$, we deduce
	 	We deduce
	\begin{equation} \label{Eq : lemma mino Q Q B}
	 Q_0(\theta) \ge 
	 \norm{\overline{B}}^2_{\mathbf{L}^2([0,T])} 
	 \norm{L}_{\mathbf{L}^1([0,T])} ^{-2}. 
	 %\Gamma(3/2-\alpha)^2 T^{2\alpha-1}.
\end{equation}
}

Now, we compare the $\mathbf{L}^2$-norm of $\overline{B}$ with the 	$\mathbf{L}^2$-norm of $t \mapsto \overline{b}(t)=\frac{\partial}{\partial t}\overline{B}(t)$.
We use an interpolation inequality between Sobolev spaces $W^{s,p}$ as stated in \cite{mironescuGagliardoNirenbergInequalities2018},%,: where notations are recalled in the Appendix,,
\begin{align*}
&\norm{f}_{W^{1,p}} \le c \norm{f}^\gamma_{W^{s_1,p_1}} \times \norm{f}^{1-\gamma}_{W^{s_2,p_2}},
\\
& \text{ for } \left\{
\begin{aligned}
	\frac{1}{p}&=\frac{\gamma}{p_1}+\frac{1-\gamma}{p_2}, \quad p_1 \in [1,\infty) \cup \{+\infty\}, p_2 \in (1,\infty) \cup \{+\infty\}, \gamma \in (0,1),
	\\
	1&=\gamma s_1 + (1-\gamma) s_2, \quad 0\le s_1 < s_2.
\end{aligned}
\right.
\end{align*}
We use this inequality with $f=\overline{B}$ and $s_1=0$, $s_2=1+{\revarn \alpha}$, %where {\revarn $\alpha$} is any fixed constant with $0<\alpha'<\alpha$, 
$p_1=2$ and $p_2=\infty$. In turn, we have  $\gamma={\revarn \frac{\alpha}{1+\alpha}}$ and $p=2\times{\revarn\frac{1+\alpha}{\alpha}}$. It entails,
\begin{equation*}
	\norm{\overline{B}}_{W^{1,p}} \le c \norm{\overline{B}}^\gamma_{W^{0,2}} \times \norm{\overline{B}}^{1-\gamma}_{W^{1+{\revarn \alpha},\infty}},
\end{equation*}
that we reinterpret as
\begin{equation*}
	\norm{\overline{B}}_{W^{1,p}} \le c \norm{\overline{B}}^\gamma_{\mathbf{L}^2([0,T])} \times \norm{\overline{B}}^{1-\gamma}_{\mathcal{H}^{1+{\revarn \alpha}}},
\end{equation*}
where $\norm{\overline{B}}_{\mathcal{H}^{1+{\revarn \alpha}}}$ is the $(1+{\revarn \alpha})$-H\"older norm of the function $\overline{B}$. Noting that $t \mapsto \overline{B}_t$ is 
$\mathcal{C}^1$ with $\overline{B}'=\overline{b}$, we have $	\norm{\overline{B}}_{W^{1,p}} 
{\revarn = 
\norm{\overline{B}}_{\mathbf{L}^p([0,T])}+
\norm{\overline{B}'}_{\mathbf{L}^p([0,T])
}} \ge 	\norm{\overline{b}}_{\mathbf{L}^p([0,T])}$.
We deduce that 
\begin{equation} \label{Eq : lemma mino Q b B}
	\norm{\overline{b}}_{\mathbf{L}^p([0,T])} \le c \norm{\overline{B}}^\gamma_{\mathbf{L}^2([0,T])} \times \norm{\overline{B}}^{1-\gamma}_{\mathcal{H}^{1+{\revarn \alpha}}}.
\end{equation}
We know that the ${\revarn \alpha}$-H\"older norm of $t\mapsto X^{0}_t(\theta)$ is upper bounded by a constant independent of $\theta$. From the assumption \ref{Ass : global Lip}, the function $t \mapsto \overline{b}(t)=b(X_t^0(\theta),\theta)-b(X_t^0(\theta^\star),\theta^\star)$ also admits a bounded ${\revarn \alpha}$-H\"older norm. In turn, $t \mapsto \overline{B}(t)=\int_0^t \overline{b}(s)ds$ admits a bounded  $(1+{\revarn \alpha})$-H\"older norm. We deduce from \eqref{Eq : lemma mino Q b B},
\begin{equation} \label{Eq : lemma mino Q b B deux}
	\norm{\overline{b}}_{\mathbf{L}^p([0,T])} \le c \norm{\overline{B}}^\gamma_{\mathbf{L}^2([0,T])}.
\end{equation}

Collecting \eqref{Eq : lemma mino Q Q B} and \eqref{Eq : lemma mino Q b B deux} gives $Q_0(\theta) \ge c \norm{\overline{b}}_{\mathbf{L}^p([0,T])}^{\frac{2}{\gamma}}$. Since $p=2\times\frac{1+{\revarn \alpha}}{{\revarn \alpha}}\ge 2$, we deduce
$Q_0(\theta) \ge c \norm{\overline{b}}_{\mathbf{L}^2([0,T])}^{\frac{2}{\gamma}}$. Now, Assumption {\revarn \ref{Ass : identif_strong}[$\rho$]} writes
$\norm{\overline{b}}_{\mathbf{L}^2([0,T])}^2 \ge c |\theta-\theta^\star|^\rho$. 
We deduce {\revarn $Q_0(\theta) \ge c  |\theta-\theta^\star|^{\frac{\rho}{\gamma}}
= c|\theta-\theta^\star|^{\frac{\rho(1+\alpha)}{\alpha}}
$.}% where we recall that $\alpha'$ is any fixed constant in $(0,\alpha)$.} 
The lemma is proved.
%{\color{red}	To do}
\end{proof}
%\begin{assumption} \label{Ass :  b C1 in theta}
%	We assume $b \in \mathcal{C}^{0,1}(\mathbb{R}^d\times\Theta)$, and that there exist $c>0$ such that
%	\begin{equation*}
%		\sup_{\theta \in \overset{\circ}{\Theta}} \abs*{\nabla_\theta b(x,\theta) - \nabla_\theta b(x',\theta)} \le c\abs{x-x'}, \quad \forall x,x'.
%	\end{equation*}
%\end{assumption}
\section{Consistency}\label{S: Consistency}
{\revarn We establish the consistency of the estimator.}
\begin{proposition} \label{prop :  consistency}
	Assume \ref{Ass : global Lip}--{\revK \ref{Ass : kernel}} and that $b \in \mathcal{C}^{1}_P(\mathbb{R}^d\times\overset{\circ}{\Theta},\mathbb{R}^d)$,
	then \begin{equation*}
		\hat{\theta}_\ve \xrightarrow[\mathbb{P}]{\ve \to 0} \theta^\star.
	\end{equation*}
	
	{\revarn 	Furthermore, if we  assume {\revarn \ref{Ass : identif_strong_contrast}[$\rho'$]} for some $\rho'>0$, then, %there exists $\gamma>0$, such that 
		for all $p \ge 1$, it holds that
		\begin{equation}
			\label{Eq : consistency Lp}
			\sup_{0<\ve\le 1}\E\left[\abs{\ve^{-1/\rho'}(\hat{\theta}_\ve-\theta^\star)}^p\right]<\infty.
	\end{equation}	}
	\begin{proof}
		%	We start with the second part. 
		{\revarn Let us write for $M>0$,} 
		%		\begin{align}\nonumber
			%			\P\left(\abs{\hat{\theta}_\ve-\theta^\star}\ge M\right)&\le \P \left( \inf_{|\theta-\theta^\star|\ge M, \theta \in \Theta} Q_\ve(\theta) \le  Q_\ve(\theta^\star) \right)
			%			\\\nonumber
			%		&\le  
			%		\begin{multlined}[t]
				%			\P \left( \inf_{|\theta-\theta^\star|\ge M, \theta \in \Theta} Q_0(\theta) - \sup_{\theta\in \Theta}  |Q_\ve(\theta)-Q_0(\theta)| \le 
				%		\right.	\\  \left. Q_0(\theta^\star) +  \sup_{\theta}|Q_\ve(\theta)-Q_0(\theta)| \right)
				%		\end{multlined}
			%				\\ \label{eq: prop const majo proba}& \le \P (2  \sup_{\theta\in\Theta}|Q_\ve(\theta)-Q_0(\theta)| \ge  \inf_{|\theta-\theta^\star|\ge M, \theta \in \Theta} Q_0(\theta) - Q_0(\theta^\star) ).
			%							\end{align}
		\begin{align}\nonumber
			\P \Big(\abs{\hat{\theta}_\ve-\theta^\star}\ge M \Big)&\le \P \bigg( \inf_{\substack{|\theta-\theta^\star|\ge M \\ \theta \in \Theta}} Q_\ve(\theta) \le  Q_\ve(\theta^\star) \bigg)
			\\\nonumber
			&\le  
			\begin{multlined}[t]
				\P \bigg( \inf_{\substack{|\theta-\theta^\star|\ge M \\ \theta \in \Theta}} Q_0(\theta) - \sup_{\theta\in \Theta}  |Q_\ve(\theta)-Q_0(\theta)| \le 
				\\   Q_0(\theta^\star) +  \sup_{\theta{\revarn{\in\Theta}}}|Q_\ve(\theta)-Q_0(\theta)| \bigg)
			\end{multlined}
			\\ \label{eq: prop const majo proba}& \le \P \Big(2  \sup_{\theta\in\Theta}|Q_\ve(\theta)-Q_0(\theta)| \ge  
			\inf_{\substack{|\theta-\theta^\star|\ge M \\ \theta \in \Theta}}  Q_0(\theta) 
			%- Q_0(\theta^\star) 
			\Big){\revar,}
		\end{align}
		{\revarn where in the last line we used $Q_0(\theta^\star)=0$.}
		
		%{\revarn We prove the first point of the proposition.}
		%For the first point of the proposition, 
		Under \ref{Ass : identif}, we know by Lemma \ref{lem : mino Q} 1),
		that {\revarn for any fixed value $M>0$, we have,} %fixed
		{\revarn $\inf_{|\theta-\theta^\star|\ge M, \theta \in \Theta} Q_0(\theta) >0$.} Thus, \eqref{eq: prop const majo proba} yields
		$		\P(\abs{\hat{\theta}_\ve-\theta^\star}\ge M) \le {\revarn\P} (\sup_{\theta\in\Theta}|Q_\ve(\theta)-Q_0(\theta)| \ge c(M))$ for some $c(M)>0$. By Proposition
		\ref{prop :  conv contrast} {\revarn below}, we know that this probability goes to zero. The consistency of $\hat{\theta}_\ve$ is shown.	
		%{\revarn 	As $Q_0(\theta^\star)=0$, 
			
			{\revarn 	 Now, we prove the second point of the proposition. Using \ref{Ass : identif_strong_contrast}[$\rho'$], 
				%by Lemma \ref{lem : mino Q} 2), 
				we know that %$ \inf_{|\theta-\theta^\star|\ge M, \theta \in \Theta} Q_0(\theta) - Q_0(\theta^\star) =
				{\revarn $\inf_{|\theta-\theta^\star|\ge M, \theta \in \Theta} Q_0(\theta) \ge c' \inf_{|\theta-\theta^\star|\ge M, \theta \in \Theta} |\theta-\theta^\star|^{\rho'}$.}
				% for some $\rho'>0$.
				Thus, if $\{\theta \in \Theta \mid \abs{\theta-\theta^\star}\ge M \} \neq \emptyset$, we have}
			\begin{equation*}
				\P(\abs{\hat{\theta}_\ve-\theta^\star}\ge M) \le  \P (2  \sup_{\theta\in\Theta}|Q_\ve(\theta)-Q_0(\theta)| \ge {\revarn c'}M^{{\rho'}} ).
			\end{equation*}
			Remark that if $\{\theta \in \Theta \mid \abs{ \theta-\theta^\star}\ge M \} = \emptyset$, the above inequality is true as the left hand side is zero, recalling $\hat{\theta}_\ve \in \Theta$. We deduce that
			\begin{equation*}
				\P(\abs{\hat{\theta}_\ve-\theta^\star}\ge M) \le 
				\frac{  \E \left[   \sup_{\theta\in\Theta}|Q_\ve(\theta)-Q_0(\theta)|^p \right]2^p}{{\revarn (c')^p}M^{p\rho'}} {\revarn,} 
			\end{equation*}
			for any $p \ge 1$.
			Using {\revarn \eqref{Eq : cv contrast unif} in Proposition \ref{prop :  conv contrast} below,} we get $	\P(\abs{\hat{\theta}_\ve-\theta^\star}\ge M) \le c(p)\frac{\ve^{p}}{M^{p\rho'}}$.
			Choosing $M=\ve^{1/\rho'}\times x$ for $x\ge 0$, {\revarn we deduce,} 
			%it yields,
			\begin{equation*}
				\P(\ve^{-1/\rho'}\abs{\hat{\theta}_\ve-\theta^\star}\ge x) \le c(p) \frac{1}{x^{p\rho'}},
			\end{equation*}	
			where $p\ge 1$ can be fixed arbitrarily large.
			This is sufficient to get $\forall q \ge 1$,  $\sup_{0<\ve\le 1}\E\left[ \abs*{\ve^{-1/\rho'}(\hat{\theta}_\ve-\theta^\star)}^q  \right]\le c(q)$. The second point of the proposition is proved.
			%	
			%	{\revarn We prove the first point of the proposition.}
			%	%For the first point of the proposition, 
			%	Under \ref{Ass : identif}, we know that {\revarn for any fixed value $M>0$, we have, %fixed
				%	$\inf_{|\theta-\theta^\star|\ge M, \theta \in \Theta} Q_0(\theta) - Q_0(\theta^\star)>0$. Thus, \eqref{eq: prop const majo proba} yields
				%	$		\P(\abs{\hat{\theta}_\ve-\theta^\star}\ge M) \le {\revarn\P} (\sup_{\theta\in\Theta}|Q_\ve(\theta)-Q_0(\theta)| \ge c(M))$ for some $c(M)>0$. By Proposition
				%	\ref{prop :  conv contrast}, we know that this probability goes to zero. The consistency of $\hat{\theta}_\ve$ is shown.	
			\end{proof}
		\end{proposition}
		{\revarn 
			\begin{remark}
				The rate appearing in \eqref{Eq : consistency Lp} depends on $\rho'$ as defined in \ref{Ass : identif_strong_contrast}[$\rho'$]. 
				Under \ref{Ass : identif_strong}[$\rho$], we can apply Lemma \ref{lem : mino Q} 
				to obtain that the convergence rate of the estimator is $\ve^{-r}$ for %any $r$ arbitrarily close to 
				$r=\frac{\alpha}{(1+\alpha)\rho}$. In particular, this rate deteriorates for small values of $\alpha$. Note that this rate is obtained without 
				any $\mathcal{C}^2$ assumption on the constrast function.
				%assuming that the contrast function is $\mathcal{C}^2$. 
				In Theorem \ref{theo : CLT}, we show that, under additional assumptions, the convergence rate is $\ve^{-1}$.
			\end{remark}
			
		}
The next proposition shows the convergence of the contrast function $Q^\ve(\theta)$ uniformly in the $\mathbf{L}^p(\Omega)$ norm {\revarn and is a crucial step in the proof the convergence of the estimator. Its proof is postponed the end of the section, as it requires several lemmas}.
\begin{proposition} \label{prop :  conv contrast}
	Assume \ref{Ass : global Lip} and that $b \in \mathcal{C}^{1}_P(\mathbb{R}^d\times\overset{\circ}{\Theta},\mathbb{R}^d)$. 
	Then, for all $p \ge 1$, we have
	\begin{equation} \label{Eq : cv contrast unif}
		\E\left[\sup_{\theta\in\Theta}\abs*{Q_\ve(\theta)-Q_0(\theta)}^p\right] \le c(p) \ve^p,	
	\end{equation}	
	where the constant $c(p)$ is independent of $\ve$.	
\end{proposition}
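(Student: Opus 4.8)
The plan is to reduce \eqref{Eq : cv contrast unif} to a single moment estimate on $X^\ve_t-X^0_t$ and then to prove that estimate by a fractional Gronwall argument. First, since $X^0=X^0(\theta^\star)$, the polarization identity $\abs{u}^2-\abs{v}^2=\langle u-v,\,u+v\rangle$ with $u=X^\ve_t-X^0_t(\theta)$ and $v=X^0_t-X^0_t(\theta)$ gives
\begin{equation*}
	Q_\ve(\theta)-Q_0(\theta)=\int_0^T\big\langle X^\ve_t-X^0_t,\ X^\ve_t+X^0_t-2X^0_t(\theta)\big\rangle\,dt .
\end{equation*}
By Cauchy--Schwarz and the uniform bound \eqref{Eq : unif bound on X^0} (which also bounds $\abs{X^0_t}=\abs{X^0_t(\theta^\star)}$), the integrand is at most $c\,\abs{X^\ve_t-X^0_t}\,(1+\abs{X^\ve_t})$ with $c$ independent of $\theta$ and $\ve$, so the supremum over $\theta$ is already controlled by a $\theta$-free quantity. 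Applying Cauchy--Schwarz in $t$, then in $\omega$, and Jensen's inequality to move the power $p$ inside the time integral, I would obtain
\begin{equation*}
	\E\Big[\sup_{\theta\in\Theta}\abs{Q_\ve(\theta)-Q_0(\theta)}^p\Big]\le c\Big(\int_0^T\E\big[\abs{X^\ve_t-X^0_t}^{2p}\big]\,dt\Big)^{1/2}\Big(\E\Big[\Big(\int_0^T(1+\abs{X^\ve_t})^2\,dt\Big)^p\Big]\Big)^{1/2},
\end{equation*}
where the second factor is bounded by a constant independent of $\ve$ by \eqref{Eq : moment Volterra}. Thus the whole statement follows once one shows $\sup_{t\in[0,T]}\E\big[\abs{X^\ve_t-X^0_t}^q\big]\le c(q)\,\ve^q$ for every $q\ge2$, with $c(q)$ independent of $\ve$.

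To prove this last bound, set $D_t=X^\ve_t-X^0_t$. Subtracting \eqref{Eq : Volterra no noise} at $\theta^\star$ from \eqref{Eq : Volterra SDE} and using the Lipschitz property of $b$ from \ref{Ass : global Lip},
\begin{equation*}
	\abs{D_t}\le \ve\,\Big|\int_0^tK(t-s)a(X^\ve_s)\,dB_s\Big|+C\int_0^tK(t-s)\abs{D_s}\,ds .
\end{equation*}
For the stochastic term, for fixed $t$ the process $u\mapsto\int_0^uK(t-s)a(X^\ve_s)\,dB_s$ ($u\in[0,t]$) is a square-integrable martingale, so the Burkholder--Davis--Gundy inequality, Minkowski's integral inequality (legitimate since $q/2\ge1$), the linear growth of $a$ with \eqref{Eq : moment Volterra}, and the $L^2$ bound in \eqref{Eq : Kernel L1 L2 bound} give
\begin{equation*}
	\E\Big[\Big|\int_0^tK(t-s)a(X^\ve_s)\,dB_s\Big|^q\Big]\le c_q\Big(\int_0^tK(t-s)^2\,\E\big[\abs{a(X^\ve_s)}^q\big]^{2/q}\,ds\Big)^{q/2}\le c_q ,
\end{equation*}
uniformly in $t\in[0,T]$ and $\ve\in[0,1]$. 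For the drift term, Jensen's inequality with the weight $K(t-s)\,ds$ (of total mass bounded by \eqref{Eq : Kernel L1 L2 bound}) yields $\big(\int_0^tK(t-s)\abs{D_s}\,ds\big)^q\le c\int_0^tK(t-s)\abs{D_s}^q\,ds$. Writing $\phi(t)=\E[\abs{D_t}^q]$, which is finite and bounded on $[0,T]$ by \eqref{Eq : moment Volterra} and \eqref{Eq : unif bound on X^0}, this produces the singular Volterra inequality
\begin{equation*}
	\phi(t)\le c\,\ve^q+c\int_0^tK(t-s)\phi(s)\,ds,\qquad t\in[0,T].
\end{equation*}
Since $K(u)=\frac{1}{\Gamma(\alpha+1/2)}u^{\alpha-1/2}\id{u>0}$ has exponent $\alpha-1/2>-1$, the generalized (fractional) Gronwall lemma gives $\phi(t)\le c\,\ve^q$ on $[0,T]$, with constant depending only on $q$, $T$, $\alpha$, and the coefficients; alternatively one iterates the inequality finitely many times until the iterated kernel $K^{\star n}$ is bounded on $[0,T]$ and then applies the classical Gronwall lemma.

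The only genuine obstacle is this last part: the moment bound for the singular stochastic convolution, which combines BDG, Minkowski's integral inequality and the $L^2$ control of $K$, together with the application of the singular Gronwall inequality to the resulting integral inequality for $\phi$. Everything else is Cauchy--Schwarz combined with the a priori bounds \eqref{Eq : moment Volterra} and \eqref{Eq : unif bound on X^0} already recorded in Section~\ref{S: Model}; I expect the fractional Gronwall step to be the one that must be stated with care, which is why I would reach for the finite-iteration variant to keep the argument self-contained.
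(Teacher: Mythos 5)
Your proof is correct, but it takes a genuinely different route from the paper. The paper obtains the uniform bound by proving two pointwise-in-$\theta$ estimates, $\sup_\theta \E\big[|Q_\ve(\theta)-Q_0(\theta)|^p\big]\le c\ve^p$ (Lemma \ref{lem : maj Lp Q}) and $\sup_\theta \E\big[|\nabla_\theta(Q_\ve-Q_0)(\theta)|^p\big]\le c\ve^p$ (Lemma \ref{lem : maj Lp nabla Q}), and then passing the supremum inside via the Sobolev embedding over the compact set $\Theta$ for $p>d_\Theta$, descending to all $p\ge1$ by ordering of $\mathbf{L}^p$ norms; this is why the statement carries the hypothesis $b\in\mathcal{C}^1_P$ and relies on the differentiability of $\theta\mapsto X^0(\theta)$ (Appendix, Remark \ref{R : rem deriv}). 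You instead exploit the algebraic identity $|u|^2-|v|^2=\langle u-v,u+v\rangle$, which makes the difference $Q_\ve(\theta)-Q_0(\theta)$ factor through the $\theta$-free quantity $X^\ve-X^0$, so the supremum over $\theta$ is absorbed deterministically by the uniform bound \eqref{Eq : unif bound on X^0}; the whole statement then reduces to $\sup_{t\le T}\E[|X^\ve_t-X^0_t|^{2p}]\le c\,\ve^{2p}$, which is exactly the paper's Lemma \ref{lem : maj Lp diff Xve X0} (you reprove it with the same BDG/Jensen--Minkowski plus fractional Gronwall machinery, so in context you could simply cite it). Your route is more elementary and slightly stronger: it needs only \ref{Ass : global Lip} (no $\mathcal{C}^1_P$ regularity in $\theta$, no derivative process $Y^0(\theta)$), works for every $p\ge1$ directly, and avoids the Sobolev step. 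What the paper's heavier approach buys is generality and reuse: the gradient computation \eqref{Eq : Nabla Q_ve split} established along the way is needed anyway for the asymptotic normality analysis, and the Sobolev-embedding scheme applies to contrasts whose $\theta$-dependence does not cancel as conveniently as it does here under polarization.
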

{\revarn Before proving the uniform convergence of the contrast function, we consider
	as a preliminary, %}In the next section, 
%we study 
the convergence in $\mathbf{L}^p$ for a fixed value of $\theta$. This requires 
%to compare $X^\ve$ and $X^0(\theta^\star)$.}
%\subsubsection{Pointwise upper bound on $Q_\ve(\theta)$}
%The next lemma 
evaluating the distance between $X^\ve_t$ and $X^0_t(\theta^\star)$.}
\begin{lemma} \label{lem : maj Lp diff Xve X0}
	Assume \ref{Ass : global Lip}.
Let us set $\mathcal{E}^\ve_t=X^\ve_t-X^0_t(\theta^\star)$. Then, for any $p\ge2$, there exists $c(p)>0$ such that 
\begin{equation*}
	\forall t \in [0,T], \quad \E\left[\abs*{\mathcal{E}^\ve_t}^p\right]
	\le c(p) \ve^p, %E_{\alpha+1/2}\left(c(p)\Gamma(\alpha+1/2)T^{\alpha+1/2}\right),
\end{equation*}
{\revarn where the constant $c(p)$ depends on $p$, $T$, and the coefficients $a$ and $b$.}
%where $E_{\alpha+1/2}$ is the Mittag-Leffler function defined by 
%$E_{\alpha+1/2}(z)=\sum_{k=0}^\infty \frac{z^k}{\Gamma(k(\alpha+1/2)+1)}$.
%	Assume \ref{Ass : global Lip}.
%	Let us set $\mathcal{E}^\ve_t=X^\ve_t-X^0_t(\theta^\star)$. Then, for any $p\ge2$, there exists $c(p)>0$ such that 
%	\begin{equation*}
%		\forall t \in [0,T], \quad \E\left[\abs*{\mathcal{E}^\ve_t}^p\right]
%		\le c(p) \ve^p E_{\alpha+1/2}\left(c(p)\Gamma(\alpha+1/2)T^{\alpha+1/2}\right),
%	\end{equation*}
%	where $E_{\alpha+1/2}$ is the Mittag-Leffler function defined by 
%	$E_{\alpha+1/2}(z)=\sum_{k=0}^\infty \frac{z^k}{\Gamma(k(\alpha+1/2)+1)}$.
\end{lemma}
\begin{proof}
Using \eqref{Eq : Volterra SDE} and \eqref{Eq : Volterra no noise}, we have
\begin{multline*}
	\mathcal{E}^\ve_t=X^\ve_t-X^0_t(\theta^\star)=\ve \int_0^t K(t-s) a(X_s^\ve) dB_s
		 \\ +\int_0^t K(t-s) (b(X^\ve_s,\theta^\star) - b(X^0_s(\theta^\star),\theta^\star)  )ds.
\end{multline*}	
	We use the {\revarn Burkholder-Davis-Gundy} inequality to get,
\begin{multline*}
	\E\left[\abs{\mathcal{E}^\ve_t}^p\right] \le c(p) \ve^p \E\left[ \abs*{\int_0^t K(t-s)^2 |a(X_s^\ve)|^2 ds}^{p/2}\right]
	\\ +c(p)\E\left[ \abs*{ \int_0^t K(t-s) (b(X^\ve_s,\theta^\star) - b(X^0_s(\theta^\star),\theta^\star)  )ds}^p\right].
\end{multline*}	
We use Jensen's inequality with respect to the measures $\id{[0,T]}(s)K(t-s)^2 ds$ and
$\id{[0,T]}(s){\revK\abs*{K(t-s)}}ds$,
to deduce
%\begin{multline*}
%	\E\left[\abs{\mathcal{E}^\ve_t}^p\right] \le c(p) \ve^p 
%	\left(\int_0^T K(t-s)^2ds\right)^{p/2-1}
%	\E\left[ \left(\int_0^t K(t-s)^2 |a(X_s^\ve)|^p ds\right)\right]
%	\\ +c(p)
%	\left(\int_0^T K(t-s)ds\right)^{p-1}\E\left[ \int_0^t K(t-s) (b(X^\ve_s(\theta^\star),\theta^\star) - b(X^0_s(\theta^\star),\theta^\star)  )^pds \right]
%	\\
%	\le c(p) \ve^p 
%	\E\left[ \left(\int_0^t K(t-s)^2 |a(X_s^\ve)|^p ds\right)\right]
%\\+ c(p)
%\E\left[ \int_0^t K(t-s) (b(X^\ve_s(\theta^\star),\theta^\star) - b(X^0_s(\theta^\star),\theta^\star)  )^pds \right]
%\end{multline*}	
%	where we used \eqref{Eq : Kernel L1 L2 bound}.
\begin{align*}
		\E\left[\abs{\mathcal{E}^\ve_t}^p\right] &
		\begin{multlined}[t]
			\le c(p) \ve^p 
		\left(\int_0^T K(t-s)^2ds\right)^{p/2-1}
		\E\left[ \left(\int_0^t K(t-s)^2  \abs{a(X_s^\ve)}^p ds\right)\right]
		\\ +c(p)
		\left(\int_0^T {\revK\abs*{K(t-s)}}ds\right)^{p-1}\E\left[ \int_0^t 
		{\revK\abs*{K(t-s)}} \abs*{b(X^\ve_s,\theta^\star) - b(X^0_s(\theta^\star),\theta^\star)  }^pds \right]
		\end{multlined}
		\\
		&
	\begin{multlined}[t]
		\le c(p) \ve^p 
		\E\left[ \left(\int_0^t K(t-s)^2 \abs*{a(X_s^\ve)}^p ds\right)\right]
		\\+ c(p)
		\E\left[ \int_0^t{\revK\abs*{K(t-s)}} \abs*{b(X^\ve_s,\theta^\star) - b(X^0_s(\theta^\star),\theta^\star) }^pds \right]
	\end{multlined}
\end{align*}	
	where we used \eqref{Eq : Kernel L1 L2 bound} in the second inequality {\revarn and the constant $c(p)$ may vary from line to line}. 
	From \ref{Ass : global Lip}, we know that $a$ is sub-linear and use the Lipschitz property of $b$ to deduce,
\begin{multline*}
		\E\left[\abs{\mathcal{E}^\ve_t}^p\right] 
		\le c(p) \ve^p 
	\E\left[ \left(\int_0^t K(t-s)^2 (1+ \abs*{X_s^\ve}^p) ds\right)\right]
	\\+ c(p)
	\E\left[ \int_0^t {\revK\abs*{K(t-s)}} \abs*{X^\ve_s - X^0_s(\theta^\star) }^pds \right].	
\end{multline*}
From \eqref{Eq : moment Volterra} and \eqref{Eq : Kernel L1 L2 bound} again, we deduce
\begin{align*}
	\E\left[\abs{\mathcal{E}^\ve_t}^p\right] 
	&\le c(p) \ve^p + c(p)
	\E\left[ \int_0^t {\revK\abs*{K(t-s)}}\abs*{X^\ve_s - X^0_s(\theta^\star) }^pds \right]
	\\
	& \le c(p) \ve^p + c(p)
	\int_0^t {\revK\abs*{K(t-s)}} \E\left[\abs{\mathcal{E}^\ve_s}^p\right] ds.	
\end{align*}
Recalling that ${\revK\abs*{K(t-s)} \le c (t-s)^{\alpha-1/2}} $, we use the generalized Gronwall's inequality given by Corollary 2 in \cite{yeGeneralizedGronwallInequality2007}, and deduce
$	\E\left[\abs{\mathcal{E}^\ve_t}^p\right]  \le c(p)  \ve^p E_{\alpha+1/2}(c(p)\Gamma(\alpha+1/2)t^{\alpha+1/2})$ where 
{\revarn $E_{\alpha+1/2}$ is the Mittag-Leffler function defined by 
$E_{\alpha+1/2}(z)=\sum_{k=0}^\infty \frac{z^k}{\Gamma(k(\alpha+1/2)+1)}$. Since $t\le T$ and the Mittag-Leffler function is increasing, it yields the result.}
\end{proof}
{\revarn We can now state a pointwise convergence property of the contrast function.}
\begin{lemma}\label{lem : maj Lp Q}
	Assume \ref{Ass : global Lip}. We have for $p\ge 2$, 
	\begin{equation*}
		\sup_{\theta \in \Theta} \E\left[\abs*{Q_\ve(\theta)-Q_0(\theta)}^p\right]
		\le c(p) \ve^p
	\end{equation*}
\end{lemma}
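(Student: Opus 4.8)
The plan is to exploit the algebraic identity $|a|^2-|b|^2=\langle a-b,\,a+b\rangle$ together with the pointwise $L^p$-bound on $\mathcal{E}^\ve_t=X^\ve_t-X^0_t(\theta^\star)$ provided by Lemma \ref{lem : maj Lp diff Xve X0}. Taking $a=X^\ve_t-X^0_t(\theta)$ and $b=X^0_t-X^0_t(\theta)=X^0_t(\theta^\star)-X^0_t(\theta)$, one has $a-b=\mathcal{E}^\ve_t$ and therefore
\begin{equation*}
	Q_\ve(\theta)-Q_0(\theta)=\int_0^T \big\langle \mathcal{E}^\ve_t,\ X^\ve_t+X^0_t(\theta^\star)-2X^0_t(\theta)\big\rangle\, dt.
\end{equation*}
Applying Cauchy--Schwarz inside the integral, bounding $|X^\ve_t|\le |\mathcal{E}^\ve_t|+|X^0_t(\theta^\star)|$, and using the uniform bound \eqref{Eq : unif bound on X^0} on the deterministic trajectories, I get
\begin{equation*}
	\abs{Q_\ve(\theta)-Q_0(\theta)}\le \int_0^T \big(\abs{\mathcal{E}^\ve_t}^2+c\abs{\mathcal{E}^\ve_t}\big)\, dt,
\end{equation*}
with $c$ independent of $\theta$.

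Next I would raise this to the $p$-th power and apply Jensen's inequality with respect to the normalized measure $\tfrac1T\,dt$ on $[0,T]$, together with the elementary inequality $(u^2+cu)^p\le c(p)(u^{2p}+u^p)$ for $u\ge0$, to obtain
\begin{equation*}
	\abs{Q_\ve(\theta)-Q_0(\theta)}^p\le c(p)\int_0^T \big(\abs{\mathcal{E}^\ve_t}^{2p}+\abs{\mathcal{E}^\ve_t}^{p}\big)\, dt.
\end{equation*}
Taking expectations and invoking Lemma \ref{lem : maj Lp diff Xve X0} with the exponents $p\ge2$ and $2p\ge2$ gives $\E[\abs{\mathcal{E}^\ve_t}^{p}]\le c(p)\ve^p$ and $\E[\abs{\mathcal{E}^\ve_t}^{2p}]\le c(p)\ve^{2p}$ uniformly in $t\in[0,T]$; since $\ve\in[0,1]$ we have $\ve^{2p}\le\ve^p$, so after integrating in $t$ we conclude $\E[\abs{Q_\ve(\theta)-Q_0(\theta)}^p]\le c(p)\ve^p$. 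As every constant above is independent of $\theta$, this bound is uniform over $\Theta$, which is exactly the claim.

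There is no genuine obstacle here: the only points requiring a little care are checking that the constants from Lemma \ref{lem : maj Lp diff Xve X0} and from \eqref{Eq : unif bound on X^0} really are independent of $\theta$ (they are, by compactness of $\Theta$ and continuity of $b$), and noticing that squaring $\mathcal{E}^\ve_t$ inside the time integral forces one to use the moment estimate at the doubled exponent $2p$ — which is harmless since $2p\ge 2$. Assembling these estimates yields the statement.
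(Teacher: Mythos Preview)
Your argument is correct, and it is genuinely simpler than the paper's. You work directly at the level of the trajectories: from the polarization identity $|a|^2-|b|^2=\langle a-b,a+b\rangle$ you reduce everything to controlling $\mathcal{E}^\ve_t$ in $L^p$ and $L^{2p}$, which Lemma~\ref{lem : maj Lp diff Xve X0} already provides, and the uniform deterministic bound \eqref{Eq : unif bound on X^0} kills the $\theta$-dependence. All the steps you list are sound.

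The paper instead unfolds the Volterra representation \eqref{Eq : expression Q_ve proof} and expands the square into three pieces $Q_\ve^{(1)}+Q_\ve^{(2)}+Q_\ve^{(3)}$: a pure drift term, a pure stochastic-integral term, and a cross term. Each is then handled separately via Young's convolution inequality, BDG, and Cauchy--Schwarz, with Lemma~\ref{lem : maj Lp diff Xve X0} entering only through the Lipschitz property of $b$. This is longer, but the by-products (the functions $g_\ve,g_0$ and the estimate \eqref{Eq : bound L2 norm g_ve g0}) are recycled verbatim in the proof of Lemma~\ref{lem : maj Lp nabla Q} for the gradient $\nabla_\theta Q_\ve$. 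Your route gives the present lemma more cheaply, but you would have to redo a little work for the gradient version rather than reuse intermediate objects; that said, the same polarization trick applied to $\nabla_\theta Q_\ve$ would also go through without difficulty.
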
	
\begin{proof}
Using \eqref{Eq : Volterra SDE}, \eqref{Eq : Volterra no noise} and \eqref{Eq : contrast TFE}, we have
\begin{multline} \label{Eq : expression Q_ve proof}
	Q_\ve(\theta)=\int_0^T \Bigg| \int_0^t K(t-s)\left(b(X^\ve_s,\theta^\star) - b(X^0_s(\theta),\theta)\right)ds  
	\\ + \ve\int_0^t K(t-s)a(X^\ve_s) dB_s
	 \Bigg|^2 dt. 
\end{multline}	
	Expanding the square of the Euclidean norm, we get 	$Q_\ve(\theta)=\sum_{j=1}^3Q_\ve^{(j)}(\theta)$, where
	\begin{align*}
		Q_\ve^{(1)}(\theta)&=\int_0^T \bigg| \int_0^t K(t-s)\left(b(X^\ve_s,\theta^\star) - b(X^0_s(\theta),\theta)\right)ds  \bigg|^2 dt,
		\\
Q_\ve^{(2)}(\theta)&=	\ve^2	\int_0^T \bigg|\int_0^t K(t-s)a(X^\ve_s) dB_s
		\bigg|^2 dt,
		\\
Q_\ve^{(3)}(\theta)	&= 
\begin{multlined}[t]
2	\ve \int_0^T 
\bigg( \int_0^t K(t-s)\left(b(X^\ve_s,\theta^\star) - b(X^0_s(\theta),\theta)\right)ds\bigg)^*\times
\\\bigg(\int_0^t K(t-s)a(X^\ve_s) dB_s
\bigg) dt.
\end{multlined}
	\end{align*}
First, we focus on 	$Q_\ve^{(1)}(\theta)$. 
Let us denote $g_\ve(t)=\int_0^t K(t-s)\left(b(X^\ve_s,\theta^\star) - b(X^0_s(\theta),\theta)\right)ds $ and $g_0(t)=\int_0^t K(t-s)\left(b(X^0_s(\theta^\star),\theta^\star) - b(X^0_s(\theta),\theta)\right)ds $.
With this notation, $Q_\ve^{(1)}(\theta)=\norm{g_\ve}^2_{\mathbf{L}^2([0,T])}$ and $Q_0(\theta)=\norm{g_0}^2_{\mathbf{L}^2([0,T])}$.
 We deduce
%\begin{align*}
%	\abs*{Q_\ve^{(1)}(\theta)-Q_0(\theta)}&=\abs*{\norm{g_\ve}^2_{\mathbf{L}^2([0,T])}-\norm{g_0}^2_{\mathbf{L}^2([0,T])}}
%	\\
%	& = \abs*{\norm{g_\ve}_{\mathbf{L}^2([0,T])}-\norm{g_0}_{\mathbf{L}^2([0,T])}}\times\abs*{\norm{g_\ve}_{\mathbf{L}^2([0,T])}+\norm{g_0}_{\mathbf{L}^2([0,T])}}\\
%	& \le \norm{g_\ve-g_0}_{\mathbf{L}^2([0,T])}\times\abs*{\norm{g_\ve}_{\mathbf{L}^2([0,T])}+\norm{g_0}_{\mathbf{L}^2([0,T])}}.
%\end{align*}
\begin{align*}
	\abs*{Q_\ve^{(1)}(\theta)-Q_0(\theta)}&=\abs*{\norm*{g_\ve}^2_{\mathbf{L}^2([0,T])}-\norm*{g_0}^2_{\mathbf{L}^2([0,T])}}
	\\
	& = \abs*{\norm*{g_\ve}_{\mathbf{L}^2([0,T])}-\norm*{g_0}_{\mathbf{L}^2([0,T])}}\times\abs*{\norm*{g_\ve}_{\mathbf{L}^2([0,T])}+\norm*{g_0}_{\mathbf{L}^2([0,T])}}\\
	& \le \norm*{g_\ve-g_0}_{\mathbf{L}^2([0,T])}\times\abs*{\norm*{g_\ve}_{\mathbf{L}^2([0,T])}+\norm*{g_0}_{\mathbf{L}^2([0,T])}}.
\end{align*}
We apply the Young's inequality for convolution of functions, which gives for any $f \in \mathbf{L}^2([0,T])$, $\norm{K\star f}_{\mathbf{L}^2([0,T])} \le \norm{K}_{\mathbf{L}^1([0,T])} \norm{f}_{\mathbf{L}^2([0,T])}$. Using that 	$\norm{K}_{\mathbf{L}^1([0,T])} \le c$,
we deduce that 
%\begin{align}\nonumber
%	\norm{g_\ve-g_0}_{\mathbf{L}^2([0,T])} &\le \norm{K}_{\mathbf{L}^1([0,T])}  
%	\norm{ b(X^\ve_{\cdot},\theta^\star)- b(X^0_{\cdot}(\theta^\star),\theta^\star)}_{\mathbf{L}^2([0,T])} ,
%	\\ \label{Eq : bound L2 norm g_ve g0}
%	&\le  c 
%	\norm{ b(X^\ve_{\cdot},\theta^\star)- b(X^0_{\cdot}(\theta^\star),\theta^\star)}_{\mathbf{L}^2([0,T])} .
%\end{align}
\begin{align}\nonumber
	\norm*{g_\ve-g_0}_{\mathbf{L}^2([0,T])} &\le \norm*{K}_{\mathbf{L}^1([0,T])}  
	\norm*{ b(X^\ve_{\cdot},\theta^\star)- b(X^0_{\cdot}(\theta^\star),\theta^\star)}_{\mathbf{L}^2([0,T])} ,
	\\ \label{Eq : bound L2 norm g_ve g0}
	&\le  c 
	\norm*{ b(X^\ve_{\cdot},\theta^\star)- b(X^0_{\cdot}(\theta^\star),\theta^\star)}_{\mathbf{L}^2([0,T])} .
\end{align}
From similar control on $\norm{g_\ve}_{\mathbf{L}^2([0,T])}$ and $\norm{g_0}_{\mathbf{L}^2([0,T])}
$, we get,
\begin{multline*}
	\abs*{Q_\ve^{(1)}(\theta)-Q_0(\theta)} \le c 
	\norm*{ b(X^\ve_{\cdot},\theta^\star)- b(X^0_{\cdot}(\theta^\star),\theta^\star)}_{\mathbf{L}^2([0,T])}  \times
	\\
	\left[	\norm*{ b(X^\ve_{\cdot},\theta^\star)- b(X^0_{\cdot}(\theta),\theta)}_{\mathbf{L}^2([0,T])} +	\norm*{ b(X^0_{\cdot}(\theta^\star),\theta^\star)- b(X^0_{\cdot}(\theta),\theta)}_{\mathbf{L}^2([0,T])} \right].
\end{multline*}
By Cauchy-Schwarz's inequality,
\begin{multline*}
	\E\left[	\abs*{Q_\ve^{(1)}(\theta)-Q_0(\theta)}^p\right]\le 
	c 	\E\left[\norm*{ b(X^\ve_{\cdot},\theta^\star)- b(X^0_{\cdot}(\theta^\star),\theta^\star)}_{\mathbf{L}^2([0,T])}^{2p}\right]^{1/2}
	\times \\	\E\left[	
		\left(	\norm*{ b(X^\ve_{\cdot},\theta^\star)- b(X^0_{\cdot}(\theta),\theta)}_{\mathbf{L}^2([0,T])} +	\norm*{ b(X^0_{\cdot},\theta^\star)- b(X^0_{\cdot}(\theta),\theta)}_{\mathbf{L}^2([0,T])} \right)^{2p}
	\right]^{1/2}.
\end{multline*}
{\revarn Applying the Jensen} inequality $\norm{f}_{{\mathbf{L}^2([0,T])}}^{2p} \le T^{p-1}
 \norm{f}_{{\mathbf{L}^{2p}([0,T])}}^{2p}$ {\revarn together with} the sub-additivity of the square root function {\revarn gives,}
\begin{multline} \label{Eq : Q^1-Q_0}
	\E\left[	\abs*{Q_\ve^{(1)}(\theta)-Q_0(\theta)}^p\right]\le 
	c 	\E\left[\norm*{ b(X^\ve_{\cdot},\theta^\star)- b(X^0_{\cdot}(\theta^\star),\theta^\star)}_{\mathbf{L}^{2p}([0,T])}^{2p}\right]^{1/2}
	\times \\\left(	\E\left[	
		\norm*{ b(X^\ve_{\cdot},\theta^\star)- b(X^0_{\cdot}(\theta),\theta)}^{2p}_{\mathbf{L}^{2p}([0,T])}
	\right]^{1/2}
	+
	  	 \norm*{ b(X^0_{\cdot}(\theta^\star),\theta^\star)- b(X^0_{\cdot}(\theta),\theta)}_{\mathbf{L}^{2p}([0,T])}^{p}\right).
\end{multline}
Now, we use the Lipschitz property of $x\mapsto b(x,\theta^\star)$ to get,
\begin{align*}
	\E\left[\norm{ b(X^\ve_{\cdot},\theta^\star)- b(X^0_{\cdot}(\theta^\star),\theta^\star)}_{\mathbf{L}^{2p}([0,T])}^{2p}\right]
	&\le c 	\E \left[ \int_0^T \abs*{ X^\ve_{s}- X^0_s(\theta^\star)}^{2p} ds \right]
	\\ &=  c\int_0^T \E \left[ \abs*{ \mathcal{E}^\ve_s}^{2p} ds \right] \le
	c \varepsilon^{2p},
\end{align*}
where in the last line we use Lemma \ref{lem :  maj Lp diff Xve X0}.

From Assumption \ref{Ass : global Lip} and {\revarn the} compactness of $\Theta$, we have 
$\sup_{\theta} \abs{b(x,\theta)} \le c(1+\abs{x})$. We recall the existence of upper {\revarn bounds} for the $2p$-moments of $X^\ve$, independent of $\ve$, {\revarn as} given in \eqref{Eq : moment Volterra}. Together with $\sup_{\theta\in\Theta}\sup_{t \in [0,T]} |X^0_t| \le c$, this yields, $\sup_\theta \E\left[	
\norm*{ b(X^\ve_{\cdot},\theta^\star)- b(X^0_{\cdot}(\theta),\theta)}_{\mathbf{L}^{2p}([0,T])} \right] \le c$ and 
$  \norm*{ b(X^0_{\cdot}(\theta^\star),\theta^\star)- b(X^0_{\cdot}(\theta),\theta)}_{\mathbf{L}^{2p}([0,T])}^{2p} \le c$.
Plugging these upper bounds in \eqref{Eq : Q^1-Q_0}, we deduce,
\begin{equation}\label{Eq : controle Q^(1)}
		\E\left[	\abs*{Q_\ve^{(1)}(\theta)-Q_0(\theta)}^p\right]\le 
	c \varepsilon^{p},
\end{equation}
where the constant $c$ is independent on $\theta$.	

We now deal with $Q_\ve^{(2)}$. Using successively, Jensen and Burkholder-Davis-Gundy inequalities, we have
\begin{align*}
\E\left[\abs*{Q^{(2)}_\ve(\theta)}^p\right] &\le
 \ve^{2p} T^{p-1} 
\int_0^T \E \left[   \abs*{\int_0^tK(t-s)a(X^\ve_s)dB_s}^{2p}\right],
\\
& \le c(p) \ve^{2p} T^{p-1} \int_0^T \E \left[   \left(\int_0^tK(t-s)^2\abs{a(X^\ve_s)}^2 ds\right)^{p}\right].
\end{align*} 
Using  {\revar again the} Jensen inequality with the measure $K(t-s)^2\id{[0,t]}(s)ds$, we deduce
\begin{multline*}
	\E\left[\abs*{Q^{(2)}_\ve(\theta)}^p\right] \le c(p) \ve^{2p} T^{p-1}
	\int_0^T 
	\left(\int_0^t K(t-s)^2ds\right)^{p-1}
	\times \\ \E\left[ \int_0^t K(t-s)^2 \abs{a(X^\ve_s)}^{2p} ds \right]dt
\end{multline*}
From Assumption \ref{Ass : global Lip} and \eqref{Eq : moment Volterra}, we {\revar obtain}
\begin{equation} \label{Eq : controle Q^(2)}
	\E\left[\abs*{Q^{(2)}_\ve(\theta)}^p\right] \le c(p) \ve^{2p} T^{p} \norm*{K}_{\mathbf{L}^2([0,T])}^{2p} \le c(p) \ve^{2p}
\end{equation}

For the term $Q_\ve^{(3)}$, we remark that by Cauchy-Schwarz's inequality 
 \begin{align*}
 	\abs*{Q_\ve^{(3)}(\theta)}&\le 2 \left(Q_\ve^{(1)}(\theta)\right)^{1/2}\left(Q_\ve^{(2)}(\theta)\right)^{1/2}
 \\	&
 \le 2 \left(\abs*{Q_\ve^{(1)}(\theta)-Q_0(\theta)}\right)^{1/2}\left(Q_\ve^{(2)}(\theta)\right)^{1/2}
 +{\revarn 2}\left(Q_0(\theta)\right)^{1/2}\left(Q_\ve^{(2)}(\theta)\right)^{1/2},
\end{align*}
 where we used the 
 sub-additivity of the square root function in the second line.
 
 From \eqref{Eq : Volterra no noise} and \eqref{Eq : unif bound on X^0}, we have $\sup_{\theta\in\Theta} Q_0(\theta) \le c$. Then, using \eqref{Eq : controle Q^(1)} and \eqref{Eq : controle Q^(2)}, we deduce
 \begin{align}\nonumber
 	\E \left[\abs*{Q_\ve^{(3)}(\theta)}^{p}\right]	&
 	\begin{multlined}[t]
 	\le
 	c(p)   
 	\E \left[ \abs*{ Q_\ve^{(1)}(\theta)-Q_0(\theta)}^p\right]^{1/2}
 	 \times \E \left[ \left(Q_\ve^{(2)}(\theta)\right)^{p}\right]^{1/2}
 	\\ + c(p) \E \left[ \left(
 	 Q_\ve^{(2)}(\theta)
 	 \right)^{p/2} \right] 
 	\end{multlined}
 	\\ \label{Eq : controle Q^(3)}
 	&\le c(p)[ \ve^{p/2} \times \ve^p + \ve^p] \le c(p) \ve^p.
 \end{align}
 Collecting \eqref{Eq : controle Q^(1)}, \eqref{Eq : controle Q^(2)} and \eqref{Eq : controle Q^(3)}, we deduce the lemma.
\end{proof}
{\revarn In order to prove the uniform convergence of the contrast, it is necessary to obtain some control on its derivative with respect to the parameter $\theta$.}
If we assume that $b \in \mathcal{C}^{1}_P(\mathbb{R}^d\times\overset{\circ}{\Theta},\mathbb{R}^d)$, then the mapping $\theta \mapsto X^0_s(\theta)$ is $\mathcal{C}^1$ for all $s \in [0,T]$ and it is possible to define 
$Y^0_s(\theta)=\nabla_\theta X_s^0(\theta) \in \mathbb{R}^d \otimes \mathbb{R}^{d_\Theta}$. The process $(Y^0_s(\theta))_{s \in [0,T]}$ solves the Volterra equation obtained by formal derivation of \eqref{Eq : Volterra no noise},
\begin{equation}\label{Eq : ODE Volterra Y}
		Y^0_t(\theta)=\int_0^t K(t-s)\left[ (\nabla_xb)(X^0_s(\theta),\theta)Y^0_s(\theta)+(\nabla_\theta b)(X^0_s(\theta),\theta)\right]ds.
\end{equation}
%	\arnaud{Maybe it is possible to find some reference}
Details on the justification of the smoothness of the deterministic Volterra equation with respect to the parameter $\theta$ can be found in Remark \ref{R : rem deriv} of the Appendix.
%{\color{red} Find reference about the flow of Volterra deteministic. It is clear from the ref Wang that under the $\mathcal{C}^{1,1}_P$ assumption there is a solution to the equation for $Y$. 

Moreover, recalling \eqref{Eq : unif bound on X^0} and using that 
$\sup_{\theta\in\Theta}(\abs{\nabla_\theta b(x,\theta)}+\abs{\nabla_x b(x,\theta)}) \le c(1+\abs{x}^c)$, we deduce that the mappings $s\mapsto (\nabla_xb)(X^0_s(\theta),\theta)$ and
$s\mapsto(\nabla_\theta b)(X^0_s(\theta),\theta)$ are bounded by constant independent of $\theta$. From this, we get 
\begin{equation}\label{Eq : bound derivative det flow} 
	\sup_{\theta \in \overset{\circ}{\Theta}} \sup_{t\in[0,T]}
\abs*{Y^0_t(\theta)} \le c.
\end{equation}
Hence, under the regularity assumption $b \in \mathcal{C}^{1}_P$, it is possible to differentiate the contrast function $Q_\ve(\theta)$
defined in \eqref{Eq : contrast TFE}. The following lemma gives a control on this derivative.
\begin{lemma} \label{lem : maj Lp nabla Q}
	Assume \ref{Ass : global Lip} and that $b \in \mathcal{C}^{1}_P(\mathbb{R}^d\times\overset{\circ}{\Theta},\mathbb{R}^d)$. 
	Then, for all $p \ge 1$, we have
	\begin{equation} \label{Eq : cv derivative contrast}
	\sup_{\theta\in\overset{\circ}{\Theta}}	\E\left[\abs*{\nabla_\theta[ Q_\ve(\theta)-Q_0(\theta)]}^p\right] \le c(p) \ve^p,	
	\end{equation}	
	where the constant $c(p)$ is independent of $\ve$.	
\end{lemma}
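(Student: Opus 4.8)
The plan is to differentiate $Q_\ve(\theta) - Q_0(\theta)$ with respect to $\theta$ and reuse the structure of the proof of Lemma~\ref{lem : maj Lp Q}. Recall the decomposition $Q_\ve(\theta) = \sum_{j=1}^3 Q_\ve^{(j)}(\theta)$ from that proof. Writing $g_\ve(t) = \int_0^t K(t-s)(b(X^\ve_s,\theta^\star) - b(X^0_s(\theta),\theta))\,ds$ and noting that $\theta \mapsto X^0_s(\theta)$ is $\mathcal{C}^1$ with derivative $Y^0_s(\theta)$ satisfying \eqref{Eq : ODE Volterra Y} and bounded uniformly by \eqref{Eq : bound derivative det flow}, one gets
$$
\nabla_\theta g_\ve(t) = -\int_0^t K(t-s)\big[(\nabla_x b)(X^0_s(\theta),\theta) Y^0_s(\theta) + (\nabla_\theta b)(X^0_s(\theta),\theta)\big]\,ds,
$$
which is \emph{independent of $\ve$}; call it $h(t,\theta)$. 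Hence $\nabla_\theta Q_\ve^{(1)}(\theta) = 2\int_0^T g_\ve(t)^* h(t,\theta)\,dt$ and $\nabla_\theta Q_0(\theta) = 2\int_0^T g_0(t)^* h(t,\theta)\,dt$, so
$$
\nabla_\theta[Q_\ve^{(1)}(\theta) - Q_0(\theta)] = 2\int_0^T (g_\ve(t) - g_0(t))^* h(t,\theta)\,dt.
$$
By Cauchy--Schwarz this is bounded by $2\norm{g_\ve - g_0}_{\mathbf{L}^2([0,T])} \norm{h(\cdot,\theta)}_{\mathbf{L}^2([0,T])}$. The first factor was already controlled in \eqref{Eq : bound L2 norm g_ve g0} by $c\,\norm{b(X^\ve_\cdot,\theta^\star) - b(X^0_\cdot(\theta^\star),\theta^\star)}_{\mathbf{L}^2([0,T])}$, whose $p$-th moment is $O(\ve^p)$ via the Lipschitz property of $b$ and Lemma~\ref{lem : maj Lp diff Xve X0}; the second factor is uniformly bounded in $\theta$ by the polynomial growth of $\nabla_x b, \nabla_\theta b$ together with \eqref{Eq : unif bound on X^0} and \eqref{Eq : bound derivative det flow} and the $\mathbf{L}^1$ bound on $K$. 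This gives $\sup_\theta \E[\abs{\nabla_\theta[Q_\ve^{(1)}(\theta) - Q_0(\theta)]}^p] \le c(p)\ve^p$.

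For the remaining terms, first note $\nabla_\theta Q_0(\theta)$ is entirely accounted for in the $j=1$ piece since $Q_0$ only involves deterministic trajectories; so it remains to bound $\nabla_\theta Q_\ve^{(2)}(\theta)$ and $\nabla_\theta Q_\ve^{(3)}(\theta)$. The term $Q_\ve^{(2)}(\theta) = \ve^2 \int_0^T \abs{\int_0^t K(t-s) a(X^\ve_s)\,dB_s}^2\,dt$ does not depend on $\theta$ at all, so $\nabla_\theta Q_\ve^{(2)}(\theta) = 0$. For $Q_\ve^{(3)}(\theta) = 2\ve \int_0^T g_\ve(t)^* \big(\int_0^t K(t-s) a(X^\ve_s)\,dB_s\big)\,dt$, differentiating in $\theta$ hits only $g_\ve$, giving $\nabla_\theta Q_\ve^{(3)}(\theta) = 2\ve \int_0^T h(t,\theta)^* \big(\int_0^t K(t-s) a(X^\ve_s)\,dB_s\big)\,dt$. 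By Cauchy--Schwarz in $t$ and then on $\Omega$, its $p$-th moment is controlled by $\ve^p \norm{h(\cdot,\theta)}_{\mathbf{L}^2([0,T])}^p$ times $\E[(Q_\ve^{(2)}(\theta)/\ve^2)^{p/2}]^{}$, and the latter is $O(1)$ (uniformly in $\ve$ and $\theta$) exactly as in the bound \eqref{Eq : controle Q^(2)} with the $\ve^{2p}$ stripped off. Combined with the uniform bound on $\norm{h(\cdot,\theta)}_{\mathbf{L}^2}$, this yields $\sup_\theta \E[\abs{\nabla_\theta Q_\ve^{(3)}(\theta)}^p] \le c(p)\ve^p$.

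Summing the three contributions gives the claim. The one point that needs a little care — and which I expect to be the main technical obstacle — is the rigorous justification that one may differentiate $Q_\ve(\theta)$ under the integral sign in $t$ and, more delicately, the differentiability of $\theta \mapsto X^0_s(\theta)$ with the derivative solving \eqref{Eq : ODE Volterra Y} and enjoying the uniform bound \eqref{Eq : bound derivative det flow}; this is the content referred to in Remark~\ref{R : rem deriv} of the Appendix, and once it is granted, everything else is a rerun of the estimates in Lemmas~\ref{lem : maj Lp diff Xve X0} and~\ref{lem : maj Lp Q} with the extra (harmless, $\ve$-free, $\theta$-uniformly bounded) factor $h(t,\theta)$ inserted. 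A minor additional check is dominated convergence / measurability to interchange $\nabla_\theta$ with $\E$, which follows from the polynomial-growth hypotheses and the moment bound \eqref{Eq : moment Volterra}.
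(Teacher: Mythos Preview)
Your proposal is correct and follows essentially the same approach as the paper. The paper writes $\nabla_\theta Q_\ve(\theta)$ directly and splits it into two pieces $\nabla_\theta Q_\ve(\theta)^{(1)}+\nabla_\theta Q_\ve(\theta)^{(2)}$, which coincide exactly with your $\nabla_\theta Q_\ve^{(1)}(\theta)$ and $\nabla_\theta Q_\ve^{(3)}(\theta)$ (your observation that $\nabla_\theta Q_\ve^{(2)}=0$ is why there are only two pieces); the key estimates---controlling the first via $\norm{g_\ve-g_0}_{\mathbf{L}^2}$ and Lemma~\ref{lem : maj Lp diff Xve X0}, and the stochastic piece via the uniform bound on the deterministic factor $h$ together with the BDG-type bound behind \eqref{Eq : controle Q^(2)}---are identical.
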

\begin{proof}
	We differentiate the expression of the contrast function given by \eqref{Eq : expression Q_ve proof}
	and use $\nabla_\theta X^0_s(\theta)=Y^0_s(\theta)$. It yields
	\begin{align}
&\nabla_\theta Q_\ve(\theta) \nonumber 
\label{Eq : Nabla Q_ve split}\\=
&\begin{multlined}[t]
-2
\int_0^T \Big( \int_0^t K(t-s)\left(b(X^\ve_s,\theta^\star) - b(X^0_s(\theta),\theta)\right)ds  \\+ \ve \int_0^t K(t-s)a(X^\ve_s) dB_s \Big)^*
\\ 
\times \Big( \int_0^t K(t-s)\big(\nabla_x b(X^0_s(\theta),\theta) Y^0_s(\theta) +
\nabla_\theta b(X^0_s(\theta),\theta) \big)ds \Big) dt,
	\end{multlined}	
\\   \nonumber
&\nabla_\theta Q_0(\theta)\\=&	
\label{Eq : Nabla Q_0 split}
\begin{multlined}[t]	
-2
\int_0^T \left( \int_0^t K(t-s)\left(b(X^0_s(\theta^\star),\theta^\star) - b(X^0_s(\theta),\theta)\right)ds \right)^* 
\\ \times 
\left( \int_0^t K(t-s)\left(\nabla_x b(X^0_s(\theta),\theta) Y^0_s(\theta) +
\nabla_\theta b(X^0_s(\theta),\theta) \right)ds \right) dt.
\end{multlined}
		\end{align}
We split $\nabla_\theta Q_\ve(\theta)=:\nabla_\theta Q_\ve(\theta)^{(1)}+\nabla_\theta Q_\ve(\theta)^{(2)}$, where 
\begin{align} \nonumber
&	\nabla_\theta Q_\ve(\theta)^{(1)}
\\	=& \label{Eq : Nabla Q^1}
	\begin{multlined}[t] -2
\int_0^T \left( \int_0^t K(t-s)\left(b(X^\ve_s,\theta^\star) - b(X^0_s(\theta),\theta)\right)ds  \right)^*
 \\\times \left( \int_0^t K(t-s)\left(\nabla_x b(X^0_s(\theta),\theta) Y^0_s(\theta) +
\nabla_\theta b(X^0_s(\theta),\theta) \right)ds \right) dt,
\end{multlined}
\\ \nonumber
& \nabla_\theta Q_\ve(\theta)^{(2)}
 \\=& \label{Eq : Nabla Q^2}
 \begin{multlined}[t]-2 \ve
 \int_0^T \left( \int_0^t K(t-s)a(X^\ve_s) dB_s \right)^*
  \\\times \left( \int_0^t K(t-s)\left(\nabla_x b(X^0_s(\theta),\theta) Y^0_s(\theta) +
 \nabla_\theta b(X^0_s(\theta),\theta) \right)ds \right) dt.
\end{multlined}
 \end{align}
Using \eqref{Eq : unif bound on X^0} with $b \in \mathcal{C}^{1}_P$, \eqref{Eq : bound derivative det flow}, and $\norm{K}_{\mathbf{L}^1([0,T])}<\infty$, we have $\abs{\nabla_\theta Q_\ve(\theta)^{(2)}}
\le c \ve \int_0^T \abs*{ \int_0^t K(t-s)a(X^\ve_s) dB_s }dt$. By computations similar to the study of the term $Q^{(2)}_\ve$ in the proof of Lemma \ref{lem : maj Lp Q}, we have
\begin{equation*}
	\sup_{\theta \in \Theta} \E \left[\abs{\nabla_\theta Q_\ve(\theta)^{(2)}}^p\right]
	\le c(p) \ve^p.
\end{equation*}
In order to prove \eqref{Eq : cv derivative contrast}, it remains to show
\begin{equation} \label{Eq : bound nabla Q^1 Q_0}
	\sup_{\theta \in \Theta} \E \left[\abs{\nabla_\theta Q_\ve(\theta)^{(1)}-\nabla_\theta Q_0(\theta)}^p\right]
	\le c(p) \ve^p.
\end{equation}
From \eqref{Eq : Nabla Q_0 split}--\eqref{Eq : Nabla Q^1}, we have
\begin{multline*}
	\nabla_\theta Q_\ve(\theta)^{(1)} - \nabla_\theta Q_0(\theta)= 
	-2 \int_0^T \left(g_\ve(t)-g_0(t)\right)^* \\ \times \left(\int_0^t K(t-s)\left(\nabla_x b(X^0_s(\theta),\theta) Y^0_s(\theta) +
	\nabla_\theta b(X^0_s(\theta),\theta) \right)ds\right) dt,
\end{multline*}
	where the functions $g_\ve$ and $g_0$ are defined in the proof of Lemma \ref{lem : maj Lp Q}.
We deduce that 
\begin{equation*}
	\abs*{	\nabla_\theta Q_\ve(\theta)^{(1)} - \nabla_\theta Q_0(\theta)}
	\le c \int_0^T \abs{g_\ve(t)-g_0(t)}dt
	\le c \sqrt{T} \norm*{ g_\ve-g_0 }_{\mathbf{L}^2([0,T])}.
\end{equation*}
Now, we use \eqref{Eq : bound L2 norm g_ve g0}, and the Lipschitz property of $b$ to deduce 
\begin{multline*}
\abs{	\nabla_\theta Q_\ve(\theta)^{(1)} - \nabla_\theta Q_0(\theta)} \le 
c \left(\int_0^T \abs{b(X^\ve_s(\theta^\star),\theta^\star)-b(X^0(\theta^\star),\theta^\star)}^2ds\right)^{1/2}
\\ \le
c \left(\int_0^T \abs{X^\ve_s(\theta^\star)-X^0(\theta^\star)}^2ds\right)^{1/2}.
\end{multline*}
From Jensen's inequality and Lemma \ref{lem : maj Lp diff Xve X0}, we get \eqref{Eq : bound nabla Q^1 Q_0} and the lemma follows.
\end{proof}
{\revarn We conclude the section by providing a proof of  Proposition \ref{prop :  conv contrast}.}
%\subsubsection{Proof of Proposition \ref{prop :  conv contrast}}\label{Sss : Conv contrast}
\begin{proof}[Proof of Proposition \ref{prop :  conv contrast}]
	We write the Sobolev embedding,
	\begin{equation*}
		\sup_{\theta\in\Theta}
		\abs*{Q_\ve(\theta)-Q_0(\theta)}^p \le c(p)
		\int_\Theta \abs*{Q_\ve(\theta)-Q_0(\theta)}^p d\theta +
		c(p) \int_{\overset{\circ}{\Theta}} \abs*{\nabla_\theta Q_\ve(\theta)-\nabla_\theta Q_0(\theta)}^p d\theta ,
	\end{equation*}
	valid as soon as $p>d_\Theta$. Now, Lemmas \ref{lem : maj Lp Q} and \ref{lem : maj Lp nabla Q} with the compactness of $\Theta$ give the result for $p$ large enough. Then, the {\revarn interpolation between} $\mathbf{L}^p(\Omega)$ norms gives the result \eqref{Eq : cv contrast unif} for all $p \ge 1$.
\end{proof}
%\subsection{Convergence of the estimator}
\section{Asymptotic normality of the estimator} \label{S: Central}

 The first step is {\revarn to} expand asymptotically the process $X^\ve$ around $\ve=0$. 
%	We introduce the notation for
% $E$ a finite dimensional space, and $k_1 \ge0, k_2\ge0$ integers, we denote by $\mathcal{C}^{k_1,k_2}_P(\mathbb{R}^d\times\overset{\circ}{\Theta},E)$ the set of functions $f : \mathbb{R}^d\times\overset{\circ}{\Theta} \to E$ such that
%for all $0 \le i \le k_1$ and all $ 0 \le j \le k_2$, the partial derivatives $(x,\theta)\mapsto 
%\frac{\partial^{i} f}{\partial x^i }$ 
%and 
%$(x,\theta)\mapsto \frac{\partial^{j} f}{\partial \theta^j }$
%are well-defined and continuous in $\mathbb{R}^d \times \overset{\circ}{\Theta}$, and satisfying,
%\begin{equation*}
%	\sup_{\theta \in \overset{\circ}{\Theta}} 
%	\norm{
%			\frac{\partial^{i} f(x,\theta)}{\partial x^i } }_E +
%	\norm{
%			\frac{\partial^{j} f(x,\theta)}{\partial \theta^j } }_E \le c(1+|x|^c)
%\end{equation*}
%for some constant $c>0$. 
Assume \ref{Ass : global Lip} and  $b \in \mathcal{C}^{1,0}_P(\mathbb{R}^d\times\overset{\circ}{\Theta},\mathbb{R}^d)$ then, we can define $Z^0$ as the $\mathbb{R}^d$ valued process, solution of the linear Volterra equation
\begin{equation} \label{Eq : SDE partial X epsilon}
	Z^0_t=\int_0^t K(t-s) \nabla_x b(X^0_s(\theta^\star),\theta^\star) Z^0_s ds + \int_0^t K(t-s)a(X^0_s(\theta^\star))dB_s.
\end{equation}
\begin{lemma} \label{lem : partial ve X zero}
	For $\ve>0$, set $\overline{E}^\ve_t=X^\ve_t-X^0_t-\ve Z^0_t$. Assume that $b \in \mathcal{C}^{2,0}_P(\mathbb{R}^d\times\overset{\circ}{\Theta},\mathbb{R}^d)$, then for all $p \ge1$,
	\begin{equation*}
		\sup_{t\in[0,T]}\E\left[\abs[\big]{\overline{E}^\ve_t}^p\right] \le c(p)\ve^{2p}
			\end{equation*}
\end{lemma}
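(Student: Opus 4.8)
The plan is to write out the Volterra equation satisfied by $\overline{E}^\ve_t = X^\ve_t - X^0_t - \ve Z^0_t$, Taylor-expand the drift term to second order around $X^0_s(\theta^\star)$, and then close a Gronwall-type estimate on $\E[|\overline{E}^\ve_s|^p]$. First I would subtract the three defining equations \eqref{Eq : Volterra SDE}, \eqref{Eq : Volterra no noise} (at $\theta=\theta^\star$) and \eqref{Eq : SDE partial X epsilon}. Writing $\mathcal{E}^\ve_s = X^\ve_s - X^0_s(\theta^\star)$ as in Lemma \ref{lem : maj Lp diff Xve X0}, a second-order Taylor expansion of $b(\cdot,\theta^\star)$ gives
\begin{equation*}
  b(X^\ve_s,\theta^\star) - b(X^0_s,\theta^\star) = \nabla_x b(X^0_s,\theta^\star)\,\mathcal{E}^\ve_s + R^\ve_s,
\end{equation*}
where the remainder satisfies $|R^\ve_s| \le c\,|\mathcal{E}^\ve_s|^2$ using $b\in\mathcal{C}^{2,0}_P$ together with the uniform moment bound \eqref{Eq : moment Volterra}, the uniform bound \eqref{Eq : unif bound on X^0}, and the polynomial growth of $\nabla_x^2 b$. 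Substituting $\mathcal{E}^\ve_s = \overline{E}^\ve_s + \ve Z^0_s$ into the linear term, the drift contribution to $\overline{E}^\ve_t$ becomes
\begin{equation*}
  \int_0^t K(t-s)\,\nabla_x b(X^0_s,\theta^\star)\,\overline{E}^\ve_s\,ds + \int_0^t K(t-s)\,R^\ve_s\,ds,
\end{equation*}
while the martingale parts combine into $\ve\int_0^t K(t-s)\big(a(X^\ve_s)-a(X^0_s)\big)dB_s$.

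Next I would take $p$-th moments. For the martingale term, Burkholder--Davis--Gundy followed by Jensen against the measure $K(t-s)^2\id{[0,t]}(s)\,ds$ (exactly as in the proofs of Lemmas \ref{lem : maj Lp diff Xve X0} and \ref{lem : maj Lp Q}) bounds its $p$-th moment by $c(p)\,\ve^p\int_0^t K(t-s)^2\,\E[|a(X^\ve_s)-a(X^0_s)|^p]\,ds \le c(p)\,\ve^p\int_0^t K(t-s)^2\,\E[|\mathcal{E}^\ve_s|^p]\,ds \le c(p)\,\ve^{2p}$, using the Lipschitz property of $a$, the $\mathbf{L}^2$ bound on $K$ in \eqref{Eq : Kernel L1 L2 bound}, and the conclusion of Lemma \ref{lem : maj Lp diff Xve X0} (which gives $\sup_s\E[|\mathcal{E}^\ve_s|^{p}]\le c(p)\ve^{p}$ for every $p$). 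For the remainder term, Jensen against $K(t-s)\id{[0,t]}(s)\,ds$ plus \eqref{Eq : Kernel L1 L2 bound} gives an upper bound $c(p)\int_0^t K(t-s)\,\E[|\mathcal{E}^\ve_s|^{2p}]\,ds \le c(p)\,\ve^{2p}$, again by Lemma \ref{lem : maj Lp diff Xve X0} applied with exponent $2p$. Finally, the linear-in-$\overline{E}^\ve$ term is handled by Jensen and the uniform bound $|\nabla_x b(X^0_s,\theta^\star)|\le c$ (from $b\in\mathcal{C}^{1,0}_P$ and \eqref{Eq : unif bound on X^0}), producing $c(p)\int_0^t K(t-s)\,\E[|\overline{E}^\ve_s|^p]\,ds$.

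Collecting these three contributions yields
\begin{equation*}
  \E\big[|\overline{E}^\ve_t|^p\big] \le c(p)\,\ve^{2p} + c(p)\int_0^t K(t-s)\,\E\big[|\overline{E}^\ve_s|^p\big]\,ds,
\end{equation*}
and since $K(t-s) = (t-s)^{\alpha-1/2}/\Gamma(\alpha+1/2)$, the generalized Gronwall inequality (Corollary 2 in \cite{yeGeneralizedGronwallInequality2007}, used exactly as in Lemma \ref{lem : maj Lp diff Xve X0}) gives $\E[|\overline{E}^\ve_t|^p] \le c(p)\,\ve^{2p}\,E_{\alpha+1/2}(c(p)\Gamma(\alpha+1/2)T^{\alpha+1/2})$, uniformly in $t\in[0,T]$, which is the claim. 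One first needs the case $p\ge 2$ so that BDG applies with a clean $L^{p/2}$ bound; the result for $p\in[1,2)$ then follows from Jensen/ordering of $\mathbf{L}^p(\Omega)$ norms. The main obstacle is purely bookkeeping: making sure the second-order Taylor remainder is genuinely controlled by $|\mathcal{E}^\ve_s|^2$ uniformly, i.e. that the intermediate point in the Taylor expansion stays in a region where $\nabla_x^2 b$ has the claimed polynomial growth and that the resulting random prefactors have enough finite moments (via \eqref{Eq : moment Volterra}) to survive a Cauchy--Schwarz split before invoking the $2p$-version of Lemma \ref{lem : maj Lp diff Xve X0}.
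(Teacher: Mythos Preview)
Your proposal is correct and follows essentially the same approach as the paper: subtract the three Volterra equations, Taylor-expand $b$ to second order (the paper writes the remainder as $\int_0^1(\nabla_x b(\tilde X^\xi_s,\theta^\star)-\nabla_x b(X^0_s,\theta^\star))\,d\xi\cdot\mathcal{E}^\ve_s$, which is the same thing), bound the remainder by $|\mathcal{E}^\ve_s|^2$ times a polynomially growing prefactor handled via Cauchy--Schwarz and Lemma~\ref{lem : maj Lp diff Xve X0} at exponent $2p$, treat the martingale difference by BDG plus Lipschitz of $a$ plus Lemma~\ref{lem : maj Lp diff Xve X0}, and close with the generalized Gronwall inequality. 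Your remark about needing $p\ge 2$ first and then extending by Jensen, and about the bookkeeping for the random prefactor, are exactly the minor points the paper glosses over.
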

\begin{proof}
	From \eqref{Eq : Volterra SDE} and \eqref{Eq : SDE partial X epsilon}, we have
	\begin{align*}
	\overline{E}_t^\ve&=X^\ve_t-X^0_t-\ve Z^0_t
	\\
		&=\begin{multlined}[t]\int_0^t 
	K(t-s)\big[b(X^\ve_s,\theta^\star)-b(X^0_s(\theta^\star),\theta^\star)-\nabla_x b(X^0_s(\theta^\star),\theta^\star)\ve Z^0_s\big] ds\\  + 
	\ve \int_0^t K(t-s)\big[a(X^\ve_s)-a(X^0_s(\theta^\star))\big] dB_s
\end{multlined}
\\
&=
\begin{multlined}[t]
	\int_0^t 
	K(t-s)\Bigl[\nabla_x b(X^0_s(\theta^\star),\theta^\star)\overline{E}_s^\ve \\ + 
	\Bigl(\int_0^1 (\nabla_x b(\tilde{X}^\xi_s,\theta^\star)-\nabla_x b(X^0_s(\theta^\star),\theta^\star))d\xi\Bigr)(X^\ve_s-X^0_s(\theta^\star) )\Bigr] ds\\  + 
	\ve \int_0^t K(t-s)\big[a(X^\ve_s)-a(X^0_s(\theta^\star))\big] dB_s
\end{multlined}
	\end{align*}
	where $\tilde{X}^\xi_s:=(1-\xi)X^0_s(\theta^\star)+\xi X^\ve_s$.	
%	we used the mean value theorem and $\xi_s$ lies in the segment $[X^0_s,X^\ve_s]$. 
Application of the Burkholder-Davis-Gundy inequality yields
\begin{multline*}
	\E \left(	\abs[\big]{\overline{E}_t^\ve}^p \right) \le c(p)
	\E \biggl( \int_0^t 
{\revK\abs*{K(t-s)}} \Bigl\lvert \nabla_x b(X^0_s(\theta^\star),\theta^\star)\overline{E}_s^\ve 
	\\+
	\Bigl(\int_0^1
	(\nabla_x b(\tilde{X}^\xi_s(\theta^\star),\theta^\star)-\nabla_x b(X^0_s,\theta^\star) ) d\xi \Bigr)(X^\ve_s-X^0_s(\theta^\star) )\Bigr\rvert ds \biggr)^p  \\+ 
c(p)	\ve^p \E\left(  \int_0^t K(t-s)^2 \big[a(X^\ve_s)-a(X^0_s(\theta^\star))\big]^2 ds \right)^{p/2}.
\end{multline*}	
Using $K\in \mathbf{L}^2([0,T])$, Jensen's inequality and Fubini's theorem, we get,
$	\E \left(	\abs{\overline{E}_t^\ve}^p \right) \le c(p)\sum_{l=1}^3F^{\ve,l}$, with
\begin{align*}
F^{\ve,1}& =	
	\left( \int_0^T {\revK\abs*{K(s)}}ds  \right)^{p-1}   \int_0^t 
	{\revK \abs*{K(t-s)}} \E \left(\abs{\nabla_x b(X^0_s(\theta^\star),\theta^\star)	\overline{E}_t^\ve}^p \right) ds, \\ 
F^{\ve,2}& =
\begin{multlined}[t]
	\left( \int_0^T {\revK\abs*{K(s)}}ds \right)^{p-1}   \int_0^t {\revK \abs*{K(t-s)}} \\ \times
	\int_0^1 \E \big[ \abs{(\nabla_x b(\tilde{X}^\xi_s,\theta^\star)-\nabla_x b(X^0_s(\theta^\star),\theta^\star))(X^\ve_s-X^0_s(\theta^\star) )}^p\big]d\xi ds  	,
\end{multlined}
	\\
F^{\ve,3}& =
\ve^p  \left(  \int_0^T K(s)^2 ds \right)^{p/2 -1}   \int_0^t K(t-s)^2 \E \left( \abs*{a(X^\ve_s)-a(X^0_s(\theta^\star))}^p  \right)ds.
\end{align*}	
 As $\nabla_x b(\cdot,\theta^\star)$ is at most {\revarn of} polynomial growth, we get that  $\sup_{t \in [0,T]} 
\abs*{\nabla_x b(X_s^0(\theta^\star),\theta^\star)} < \infty$,  using \eqref{Eq : unif bound on X^0}. It gives
$F^{\ve,1} \le c(p) \int_0^1 {\revK\abs*{K(t-s)}}	\abs{\overline{E}_s^\ve}^p  ds$.
 Using that $b \in \mathcal{C}^{2,0}$ and {\revarn that  $\tilde{X}^\xi_s$
 	% \in [X^0_s(\theta^\star),X^\ve_s]$
 belongs to the segment joining the points $X^0_s(\theta^\star)$ and $X^\ve_s$,} 
  we can write
$\E \big[ \abs{(\nabla_x b(\tilde{X}^\xi_s,\theta^\star)-\nabla_x b(X^0_s(\theta^\star),\theta^\star))(X^\ve_s-X^0_s )}^p\big]  \le 
c \E \big[ \abs{(1+\abs{X^0_s}^c+\abs{X^\ve_s}^c) \abs{X^\ve_s-X^0_s }^2}^p\big] \le  
c \E \big[ (1+\abs{X^0_s(\theta^\star)}^c+\abs{X^\ve_s}^c)^{2p} \big]^{1/2} \times \E \big[ \abs{X^\ve_s-X^0_s(\theta^\star)}^{4p}\big]^{1/2} \le c \ve^{2p}
$, where we used \eqref{Eq : moment Volterra}, \eqref{Eq : unif bound on X^0} and Lemma \ref{lem : maj Lp diff Xve X0}. It shows $F^{\ve,2} \le c(p) \ve^{2p}$.
From Assumption \ref{Ass : global Lip} and Lemma \ref{lem : maj Lp diff Xve X0}, we have $ \E \left( \abs*{a(X^\ve_s)-a(X^0_s(\theta^\star))}^p \right) \le c\ve^p$. Then, we deduce $F^{3,\ve} \le c(p) \ve^{2p}$. Collecting these results implies,
 \begin{equation*}
 	 	\E \left(	\abs[\big]{\overline{E}_t^\ve}^p \right) \le c(p)
 	  \int_0^t 
 	 {\revK\abs*{K(t-s)}} 	\E \left( \abs[\big]{\overline{E}_s^\ve}^p \right) ds + c(p)    \ve^{2p} .
 	 \end{equation*}	
 	where we used $K \in \mathbf{L}^2([0,T])$. Eventually, {\revK we  use \eqref{Eq : comp K K0} and} resort to the Gronwall inequality given by Corollary 2 in \cite{yeGeneralizedGronwallInequality2007}, and deduce
 	$\sup_{t \in [0,T ]} 	\E \left(	\abs[\big]{\overline{E}_t^\ve}^p \right) \le c(p) \ve^{2p}$.		
\end{proof}	 

\begin{lemma}\label{lem :  cv grad Q}
Assume \ref{Ass : global Lip} and $b \in \mathcal{C}^{2,0}_P(\mathbb{R}^d\times\overset{\circ}{\Theta},\mathbb{R}^d) \cap\mathcal{C}^{1}_P(\mathbb{R}^d\times\overset{\circ}{\Theta},\mathbb{R}^d) $. Then, for any $p \ge 1$,
\begin{equation*}
	\frac{1}{\ve} \nabla_\theta Q_\ve(\theta^\star)
	\xrightarrow[L^p]{\ve\to 0} \dot{Q}_0^{1}+\dot{Q}_0^{2},
\end{equation*}
	where
\begin{align*}
\dot{Q}_0^{1}&:=-2
\begin{multlined}[t] \int_0^T \left(\int_0^t K(t-s) \nabla_x b(X^0_s(\theta^\star),\theta^\star) Z^0_s ds\right)^*
\\\times 
\left( \int_0^t K(t-s)\left(\nabla_x b(X^0_s(\theta^\star),\theta^\star) Y^0_s(\theta^\star) +
\nabla_\theta b(X^0_s(\theta^\star),\theta^\star) \right)ds \right) dt ,
\end{multlined}
\\
\dot{Q}_0^{2}&:= -2
\begin{multlined}[t]\int_0^T \left(\int_0^t K(t-s) a(X^0_s) dB_s\right)^* 
\\\times 
\left( \int_0^t K(t-s)\left(\nabla_x b(X^0_s(\theta^\star),\theta^\star) Y^0_s(\theta^\star) +
\nabla_\theta b(X^0_s(\theta^\star),\theta^\star) \right)ds \right)  dt.
\end{multlined}
\end{align*}
\end{lemma}
\begin{proof}
{\revarn Recalling the notations \eqref{Eq : Nabla Q^1}--\eqref{Eq : Nabla Q^2} introduced in the proof of Lemma \ref{lem : maj Lp nabla Q}, we write
%Recalling \eqref{Eq : Nabla Q_ve split}
%\begin{equation*}
%\frac{1}{\ve}\nabla_\theta Q_\ve(\theta^\star)= \dot{Q}^{1}_\ve(\theta^\star)+\dot{Q}^{2}_\ve(\theta^\star),
%\end{equation*}
\begin{equation*}
	\frac{1}{\ve}\nabla_\theta Q_\ve(\theta^\star)= \frac{1}{\ve}
	\nabla_\theta Q_\ve(\theta^\star)^{(1)}+\frac{1}{\ve}\nabla_\theta Q_\ve(\theta^\star)^{(2)}
\end{equation*}
with
\begin{align*}
\ve^{-1}
\nabla_\theta Q_\ve(\theta^\star)^{(1)} &=\begin{multlined}[t]
	-2
	\int_0^T \left(\int_0^t K(t-s)\frac{1}{\ve}\left(b(X^\ve_s,\theta^\star) - b(X^0_s(\theta^\star),\theta^\star)\right)ds\right)^* 
	\\ \times \left( \int_0^t K(t-s)\left(\nabla_x b(X^0_s(\theta^\star),\theta^\star) Y^0_s(\theta^\star) +
	\nabla_\theta b(X^0_s(\theta^\star),\theta^\star) \right)ds \right) dt,
\end{multlined}
\\
\ve^{-1}
\nabla_\theta Q_\ve(\theta^\star)^{(2)}
&=\begin{multlined}[t]
	-2
	\int_0^T \left( \int_0^t K(t-s)a(X^\ve_s) dB_s \right)^*
	\\ \times \left( \int_0^t K(t-s)\left(\nabla_x b(X^0_s(\theta^\star),\theta^\star) Y^0_s(\theta^\star) +
	\nabla_\theta b(X^0_s(\theta^\star),\theta^\star) \right)ds \right) dt.
\end{multlined}
\end{align*}
}
We start with the convergence 
${\revarn \ve^{-1}
	\nabla_\theta Q_\ve(\theta^\star)^{(2)}}\to \dot{Q}^2_0$
%$\dot{Q}^{2}_\ve(\theta^\star) \to \dot{Q}^2_0$ 
in $\mathbf{L}^p$, and {w.l.o.g.} assume that $p\ge2$.  As $b \in \mathcal{C}^{1}_P$ and the paths $s \mapsto X_s^0(\theta)$ $s \mapsto Y_s^0(\theta)$ are bounded, we have that $\eta_t:=
 \int_0^t K(t-s)\left(\nabla_x b(X^0_s(\theta^\star),\theta^\star) Y^0_s(\theta^\star) +
\nabla_\theta b(X^0_s(\theta^\star),\theta^\star)\right)ds$ is bounded by a constant independent of {\revarn $t\in[0,T]$.}
Thus, we deduce
\begin{align*}
\abs*{{\revarn \ve^{-1}	\nabla_\theta Q_\ve(\theta^\star)^{(2)} }-\dot{Q}^{2}_0}^p &=2^p
%	\abs{\dot{Q}^{2}_\ve(\theta^\star)-\dot{Q}^{2}_0}^p &=2^p
	 \abs*{\int_0^T \big(\int_0^t K(t-s)[a(X^\ve_s)-a(X^0_s(\theta^\star))]dB_s\big)^* \eta_t dt}^p
	\\
	& \le C T^{p-1} \int_0^T \abs[\big]{\int_0^t K(t-s)[a(X^\ve_s)-a(X^0_s(\theta^\star))]dB_s}^p dt,
\end{align*}
where we used Jensen's inequality. We use Burkholder-Davis-Gundy inequality and get,
%µThus, we have
\begin{align*}
	\E\left[\abs*{
		{\revarn \ve^{-1}\nabla_\theta Q_\ve(\theta^\star)^{(2)} }
		%	\dot{Q}^{2}_\ve(\theta^\star)		
		-\dot{Q}^{2}_0}^p \right] 
	& \le C T^{p-1} \int_0^T \E \left[\abs[\big]{\int_0^t K(t-s)^2[a(X^\ve_s)-a(X^0_s(\theta^\star))]^2ds}^{p/2}\right] dt
	\\
	& \le \begin{multlined}[t]
		C T^{p-1} \int_0^T \E \Bigl[\bigl(\int_0^t K(t-s)^2 ds \bigr)^{p/2-1} \\ \times \bigl( \int_0^t K(t-s)^2 [a(X^\ve_s)-a(X^0_s(\theta^\star))]^pds\bigr)\Bigr] dt,
	\end{multlined}
\end{align*}
where we used again Jensen's inequality in the last line.  From $K \in \mathbf{L}^2([0,T])$, we have
\begin{equation*}
	\E\left[	\abs*{{\revarn \ve^{-1}\nabla_\theta Q_\ve(\theta^\star)^{(2)} }
		%\dot{Q}^{2}_\ve(\theta^\star)
		-\dot{Q}^{2}_0}^p \right] \le 
	C \int_0^T \left(\int_0^t K(t-s)^2 \E\big[\abs{a(X^\ve_s)-a(X^0_s(\theta^\star))}^p\big]ds\right)dt.
\end{equation*}
Recalling Assumption \ref{Ass : global Lip} and Lemma \ref{lem : maj Lp diff Xve X0}, we deduce $	
\E\left[	\abs[\big]{
	{\revarn \ve^{-1}\nabla_\theta Q_\ve(\theta^\star)^{(2)} }
	%\dot{Q}^{2}_\ve(\theta^\star)
	-\dot{Q}^{2}_0}^p \right] \le c \ve^{p}$.

We now focus on the convergence 
${\revarn \ve^{-1}\nabla_\theta Q_\ve(\theta^\star)^{(1)} }\to \dot{Q}^1_0$
%$\dot{Q}^{1}_\ve(\theta^\star) \to \dot{Q}^1_0$ 
in $\mathbf{L}^p$. We write
\begin{equation} \label{Eq : diff Q1 ve Q1 zero}
	\abs*{
		{\revarn \ve^{-1}\nabla_\theta Q_\ve(\theta^\star)^{(1)} }
		%\dot{Q}^{1}_\ve(\theta^\star) 
		- \dot{Q}^1_0}^p = 2^p\abs*{
	\int_0^T \big(\int_0^t K(t-s) \frac{G_s}{\ve}ds\big)^*\eta_tdt}^p
\end{equation}
where $G_s:=b(X^\ve_s,\theta^\star)-b(X^0_s(\theta^\star),\theta^\star)-\ve \nabla_x b(X^0_s(\theta^\star),\theta^\star)Z^0_s$.
From Jensen's inequality and $\norm{\eta}_{\infty} \le C$, we deduce that the right-hand side of \eqref{Eq : diff Q1 ve Q1 zero} is no greater than
\begin{multline*}
	C \int_0^T \abs*{\int_0^t K(t-s)\frac{G_s}{\ve}ds}^pdt \\\le C\int_0^T \big(\int_0^t {\revK \abs*{K(t-s)}} ds\big)^{p-1}\big(\int_0^t {\revK \abs*{K(t-s)}}
	\abs[\big]{\frac{G_s}{\ve}}^pds\big) dt.
\end{multline*}
We deduce that 
\begin{equation}\label{Eq : Lp norm Q1 ve Q1 zero}
	\E\left[ 	\abs*{ {\revarn \ve^{-1}\nabla_\theta Q_\ve(\theta^\star)^{(1)} }
		%\dot{Q}^{1}_\ve(\theta^\star) 
		- \dot{Q}^1_0}^p \right]
		\le \frac{C}{\ve^p} \int_0^T \big(\int_0^t {\revK \abs*{K(t-s)}} \E\left[\abs{G_s}^p\right] ds \big) dt.
\end{equation}
Using {\revarn the} order two Taylor's formula {\revarn for the $\mathbb{R^d}$ valued 
function $x\mapsto b(x,\theta^\star)$,} we can write
{\revarn %$b(X^\ve_s,\theta^\star)-b(X^0(\theta^\star),\theta^\star)=\nabla b_x(X^0(\theta^\star),\theta^2)(X_s^\ve-X^0(\theta^\star))+\frac{1}{2}(X_s^\ve-X^0(\theta^\star))^*D^2_{xx} b(\widetilde{X}_s,\theta^\star)
%(X_s^\ve-X^0(\theta^\star))$
%where $\widetilde{X}_s$ lies in the segment $[X^0(\theta^\star),X_s^\ve]$. 
\begin{multline*}
	b(X^\ve_s,\theta^\star)-b(X^0_s(\theta^\star),\theta^\star)=\nabla b_x(X^0_s(\theta^\star),\theta^\star)(X_s^\ve-X^0_s(\theta^\star))
	\\ +
\int_0^1(1-\xi) D^2_{xx} b(\widetilde{X}^\xi_s,\theta^\star)\big[X_s^\ve-X^0_s(\theta^\star),X_s^\ve-X^0_s(\theta^\star)\big]d\xi,
\end{multline*}
%(X_s^\ve-X^0(\theta^\star))$
where we use the notation
$D^2_{xx} b(x ,\theta^\star)\big[u,v]:=
\begin{bmatrix} \sum_{1\le j,k\le d} \frac{\partial^2}{\partial x_j\partial x_k} b_i(x,\theta^\star) u_j{\revarn v_k}
\end{bmatrix}_{1\le i\le d} \in \mathbb{R}^d$, and with $\widetilde{X}^\xi_s=\xi X_s^\ve+(1-\xi)X^0_s(\theta^\star)$.}
% lies in the segment $[X^0_s(\theta^\star),X_s^\ve]$ and 
Recalling the definition
of $\overline{E}^\ve_s$ in the statement of Lemma \ref{lem : partial ve X zero}, we deduce
\begin{align*}
	\abs*{G_s}&=\abs*{\nabla_x b(X^0_s(\theta^\star),\theta^\star)\overline{E}_s^\ve + 
		%\frac{1}{2}(X_s^\ve-X^0(\theta^\star))^*D^2_{xx} b(\widetilde{X}_s,\theta^\star)(X_s^\ve-X^0(\theta^\star))
		\int_0^1 (1-\xi) D^2_{xx} b(\widetilde{X}^\xi_s,\theta^\star)\big[X_s^\ve-X^0_s(\theta^\star),X_s^\ve-X^0_s(\theta^\star)\big] d\xi		
		}
	\\
	&\le C \abs{\overline{E}_s^\ve } + C (1+\abs{X_s^0(\theta^\star)}^C+\abs{X_s^\ve}^C)
\abs{X_s^\ve-X^0_s(\theta^\star)}^2,	
\end{align*}
where we used that {\revarn $\widetilde{X}^\xi_s$ lies in the segment $[X^0(\theta^\star),X_s^\ve]$}, together with $b \in \mathcal{C}^{2,0}_P$ and \eqref{Eq : unif bound on X^0}.
Now, Cauchy-Schwarz's inequality gives,
\begin{multline*}
	\E\left[\abs{G_s}^p\right]\le C \E\left[\abs{\overline{E}_s^\ve}^p \right]
	\\ +
	\E \left[   (1+\abs{X_s^0(\theta^\star)}^C+\abs{X_s^\ve}^C)^{2p} \right]^{1/2}
 		\E \left[ \abs{X_s^\ve-X^0_s(\theta^\star)}^{4p}\right]^{1/2}.
\end{multline*}
Using \eqref{Eq : moment Volterra}, with Lemmas \ref{lem : maj Lp diff Xve X0} and \ref{lem : partial ve X zero} , we deduce
%\begin{equation*}
$		\E\left[\abs{G_s}^p\right] \le C \ve^{2p}$ .
%\end{equation*}
Then, \eqref{Eq : Lp norm Q1 ve Q1 zero} yields
 $	\E\left[ 	\abs*{{\revarn \ve^{-1}\nabla_\theta Q_\ve(\theta^\star)^{(1)} }
 	%\dot{Q}^{1}_\ve(\theta^\star) 
 	- \dot{Q}^1_0}^p \right] \le C {\ve}^p$, 
 and in turn ${\revarn \ve^{-1}\nabla_\theta Q_\ve(\theta^\star)^{(1)} }$
 %$\dot{Q}^{1}_\ve(\theta^\star)$ 
 converges to $\dot{Q}^1_0$ in $\mathbf{L}^p$.
\end{proof}	

Let us introduce the following $d_\Theta \times d_\Theta$ matrix, defined by
\begin{multline}\label{Eq : def cal J}
	\mathcal{J}_{u,v}(\theta)=2 \int_0^T \left( \int_0^t K(t-s) 
	\frac{\partial}{{\revarn\partial}\theta_u} (b(X^0_s(\theta),\theta)) ds \right)^*
\\\times\left( \int_0^t K(t-s) 
\frac{\partial}{{\revarn\partial}\theta_v} (b(X^0_s(\theta),\theta)) ds \right) dt,
\end{multline}
where $(u,v) \in \{1,\dots,d_\Theta\}$. Remark that
\begin{equation}	
	\label{Eq : partial b theta explicit}
	\frac{\partial}{\theta_u} (b(X^0_s(\theta),\theta))= \nabla_x b(X^0_s(\theta),\theta) \times (Y^0_s(\theta))_{\cdot,u} +
\nabla_\theta b(X^0_s(\theta),\theta)_{\revarn \cdot,u} \,.
\end{equation}
In the sequel, we will need the following assumption :
\begin{assumption} \label{Ass : positive matrix}
	We have $\det\left( \mathcal{J}(\theta^\star) \right) >0$. 
	% If $\theta\neq\theta^\star$ then $\int_0^T \abs*{b(X^0_t,\theta)-b(X^0_t,\theta^\star)}^2 dt >0$.
\end{assumption}
%{\revarn 
Before stating the central limit theorem for the estimator $\hat{\theta}_\ve$, we present a simple proposition, concerning the validity of \ref{Ass : positive matrix}. Its proof is postponed to the Appendix.
\begin{proposition} \label{prop : grad b invert A3}
{\revarn	Assume \ref{Ass : global Lip}, {\revK \ref{Ass : kernel}}, and that $b\in\mathcal{C}^1_P(\mathbb{R}^d\times\overset{\circ}{\Theta},\mathbb{R}^d)$. 
	The condition
	$\det \left(\nabla_\theta b(x_0,\theta^\star)^*\nabla_\theta b(x_0,\theta^\star)\right)\neq 0$ implies \ref{Ass : positive matrix}.}
\end{proposition}
%}
\begin{theorem} \label{theo : CLT}
	Assume \ref{Ass : global Lip}, \ref{Ass : identif}, {\revK \ref{Ass : kernel}}, \ref{Ass : positive matrix}, that
	 $b \in \mathcal{C}^{2}_P(\mathbb{R}^d\times\overset{\circ}{\Theta},\mathbb{R}^d)$ and $\theta^\star \in \overset{\circ}{\Theta}$. Then,
	\begin{equation} \label{Eq : asymptotic est proba}
		\frac{1}{\ve}(\hat{\theta}_\ve-\theta^\star) \xrightarrow[\mathbb{P}]{n\to\infty}
		- \mathcal{J}(\theta^\star)^{-1} \times \big(\dot{Q}_0^{1}+\dot{Q}_0^{2} \big)^*.
	\end{equation}
	If we furthermore assume  {\revarn \ref{Ass : identif_strong}[$\rho$] for some $\rho>0$,} then the convergence holds in $\mathbf{L}^p$ for all $p\ge 1$.	
\end{theorem}
\begin{proof}
	Under the assumption that $b \in \mathcal{C}^{2}_P$, {\revarn the mapping $\theta \mapsto X^0_t(\theta)$ is of class $\mathcal{C}^2$ and its derivatives are given as solutions} of 
		the formal differentiation of the Volterra equation \eqref{Eq : Volterra no noise} (see Remark \ref{R : rem deriv} in the Appendix for a justification.).
	This allows {\revarn us} to differentiate twice the contrast function $Q_\ve$. An expression for the first derivative is given in \eqref{Eq : Nabla Q_ve split}, 
	which we rewrite
	\begin{multline*}
		\frac{\partial Q_\ve}{\partial \theta_u}(\theta) = 	
		-2
		\int_0^T \Big( \int_0^t K(t-s)\left(b(X^0_s(\theta^\star),\theta^\star) - b(X^0_s(\theta),\theta)\right)ds 
		\\ + \ve \int_0^t K(t-s)a(X^\ve_s) dB_s 	\Big)^* 
		\\ \times 
		\Big( \int_0^t K(t-s)\frac{\partial}{\partial \theta_u}\left(b(X^0_s(\theta),\theta)\right) ds \Big) dt.
	\end{multline*}
	Differentiating once more, we get
	\begin{multline*}
		\frac{\partial Q_\ve}{\partial \theta_u \partial \theta_v} (\theta)= 	
		2
		\int_0^T \left( \int_0^t K(t-s)\frac{\partial}{\partial \theta_u}\left(b(X^0_s(\theta),\theta)\right) ds  \right)^* 
		\\ \times 
		\left( \int_0^t K(t-s)\frac{\partial}{\partial \theta_{\revarn v}}\left(b(X^0_s(\theta),\theta)\right) ds \right) dt
		+\mathcal{E}_{u,v}(\theta)
		%		-2
		%	\int_0^T \left( \int_0^t K(t-s)\left(b(X^0_s(\theta^\star),\theta^\star) - b(X^0_s(\theta),\theta)\right)ds 
		%	+ \ve \int_0^t K(t-s)a(X^\ve_s) dB_s 	\right)^* 
		%	\\ \times 
		%	\left( \int_0^t K(t-s)\frac{\partial^2}{\partial \theta_u\partial \theta_v}\left(b(X^0_s(\theta),\theta)\right) ds \right) dt.
	\end{multline*}
	where 
	\begin{multline}\label{Eq : def mathcal E in CLT}
		\mathcal{E}_{u,v}(\theta)=
		-2
		\int_0^T \Big( \int_0^t K(t-s)\left(b(X^0_s(\theta^\star),\theta^\star) - b(X^0_s(\theta),\theta)\right)ds 
	\\	+ \ve \int_0^t K(t-s)a(X^\ve_s) dB_s 	\Big)^* 
		\\ \times 
		\Big( \int_0^t K(t-s)\frac{\partial^2}{\partial \theta_u\partial \theta_v}\left(b(X^0_s(\theta),\theta)\right) ds \Big) dt.
	\end{multline}
	This expression gives $	\frac{\partial Q_\ve}{\partial \theta_u \partial \theta_v} (\theta)=\mathcal{J}_{u,v}(\theta)+
	\mathcal{E}_{u,v}(\theta)$.
		
Now, using that $\hat{\theta}_\ve$ is a minimizer of the contrast function, we write for $\hat{\theta}_\ve \in \overset{\circ}{\Theta}$,  $\nabla_\theta Q_\ve(\hat{\theta}_\ve)^* = 0 = 
	\nabla_\theta Q_\ve({\theta}^\star)^* + \int_0^1 D^2_{\theta\theta} Q_\ve(\tilde{\theta}_\ve(r))(\hat{\theta}_\ve-\theta^\star)dr$,
	where $D^2_{\theta\theta} Q_\ve(\theta)$ is the matrix $\begin{bmatrix} \frac{\partial^2 Q_\ve(\theta)}{\partial \theta_u \partial \theta_v} \end{bmatrix}_{u,v}$ and
	$\tilde{\theta}_\ve(r)=\theta^\star+r(\hat{\theta}_\ve-\theta^\star)$.
%	$\tilde{\theta}_\ve$ lies in the segment $[\theta^\star,\hat{\theta}_\ve]$.
	On the event $  \{\det \big(\int_0^1 D^2_{\theta\theta} Q_\ve(\tilde{\theta}_\ve(s)) ds\big) \neq 0;~ \hat{\theta}_\ve \in \overset{\circ}{\Theta} \}$, we have
	\begin{equation} \label{Eq : fond CLT est}
		\ve^{-1}(\hat{\theta}_\ve - \theta^\star)=- \left(\int_0^1 D^2_{\theta\theta} Q_\ve(\tilde{\theta}_\ve(r))dr \right)^{-1} \times \frac{ 	\nabla_\theta Q_\ve({\theta}^\star)^*}{\ve}.
	\end{equation}
We now prove for all $(u,v)\in \{1,\dots,d_\Theta\}$,
\begin{equation}\label{Eq : cv proba cal J}
		\int_0^1 \frac{\partial^2 Q_\ve}{\partial \theta_u\partial \theta_v}(\tilde{\theta}_\ve(r))dr \xrightarrow[\mathbb{P}]{\ve \to 0}  \mathcal{J}_{u,v}(\theta^\star) .
\end{equation}
From Proposition \ref{prop :  consistency}, we know that $\hat{\theta}_\ve \xrightarrow{\mathbb{P}} \theta^\star$ and in consequence 
$\sup_{r \in [0,1]}\abs{\tilde{\theta}_\ve(r) -\theta^\star} \xrightarrow{\mathbb{P}}0$. The function $\theta \mapsto \mathcal{J}(\theta)$ is $\mathcal{C}^1$, using the expression
\eqref{Eq : def cal J}--\eqref{Eq : partial b theta explicit} and the fact that $\theta \mapsto X^0_s(\theta)$ is of class $\mathcal{C}^2$. Thus, we deduce
that $\int_0^1\mathcal{J}_{u,v}(\tilde{\theta}_\ve(r))dr \xrightarrow{\mathbb{P}} \mathcal{J}_{u,v}({\theta}^\star)$.

Now, remark that  $(s,\theta) \mapsto \frac{\partial^2}{\partial \theta_u\partial \theta_v}\left(b(X^0_s(\theta),\theta)\right)$ is bounded on $[0,T]\times\Theta$. Also, we have 
$ \abs{\int_0^t K(t-s)\left(b(X^0_s(\theta^\star),\theta^\star) - b(X^0_s(\theta),\theta)\right)ds} \le c |\theta^\star-\theta|$, using that $\theta \mapsto b(X^0_s(\theta),\theta)$ is $\mathcal{C}^1$ with a bounded derivative. It implies 
\begin{multline}\label{Eq : Lip part cal E}
\sup_{r\in [0,1]}	\Bigg|
	\int_0^T \left( \int_0^t K(t-s)\left(b(X^0_s(\theta^\star),\theta^\star) - b(X^0_s(\tilde{\theta}_\ve(r)),\tilde{\theta}_\ve(r))\right)ds 
	\right)^* 
	\\ \times 
	\left( \int_0^t K(t-s)\frac{\partial^2}{\partial \theta_u\partial \theta_v}\left(b(X^0_s(\tilde{\theta}_\ve(r)),\tilde{\theta}_\ve(r))\right) ds \right) dt
	\Bigg| \\ \le c \sup_{r\in[0,1]}\abs{\tilde{\theta}_\ve(r) - \theta^\star} \le c\abs{\hat{\theta}_\ve - \theta^\star} 
	\xrightarrow[\mathbb{P}]{\ve\to0}0.
\end{multline}
Moreover, using \eqref{Eq : moment Volterra}, we have
\begin{multline*}
	\ve \sup_{r \in [0,1]} \Big|
\int_0^T \left(  \int_0^t K(t-s)a(X^\ve_s) dB_s 	\right)^*  \\\times
\left( \int_0^t K(t-s)\frac{\partial^2}{\partial \theta_u\partial \theta_v}\left(b(X^0_s(\tilde{\theta}_\ve(r)),\tilde{\theta}_\ve(r))\right) ds \right) dt \Big|
\\ \le c \ve \int_0^T  \abs*{\int_0^t K(t-s)a(X^\ve_s) dB_s} dt
\xrightarrow[\mathbb{P}]{\ve\to0} 0.
\end{multline*}
Hence $\int_0^1\abs{\mathcal{E}_{u,v}(\tilde{\theta}_\ve(r))}dr \xrightarrow[\mathbb{P}]{\ve \to 0}0$, and 
{\revarn it implies} \eqref{Eq : cv proba cal J}.

Now, using Assumption \ref{Ass : positive matrix}, we deduce that 
$\P\left( \det\big( \int_0^1 D_{\theta\theta}^2 Q_\ve(\tilde{\theta}_\ve(r))dr\big) \neq 0\right)\xrightarrow{\ve\to0}1$. Recall that on this set, we have $	\ve^{-1}(\hat{\theta}_\ve - \theta^\star)=- \bigl( \int_0^1 D^2_{\theta\theta} Q_\ve(\tilde{\theta}_\ve(r))dr\bigr)^{-1} \times \frac{ 	\nabla_\theta Q_\ve({\theta}^\star)^*}{\ve}$.
{\revarn Then, \eqref{Eq : cv proba cal J} and Proposition \ref{lem :  cv grad Q} yield,}
\begin{equation*}
	- \bigl( \int_0^1 D^2_{\theta\theta} Q_\ve(\tilde{\theta}_\ve(r))dr\bigr)^{-1} \times \frac{ 	\nabla_\theta Q_\ve({\theta}^\star)^*}{\ve} \xrightarrow[\mathbb{P}]{\ve\to 0}
		- \mathcal{J}(\theta^\star)\times\bigl(\dot{Q}_0^{1}+\dot{Q}_0^{2}\bigr).
\end{equation*}
This shows the convergence \eqref{Eq : asymptotic est proba}.

We now assume {\revarn \ref{Ass : identif_strong}[$\rho$] with $\rho>0$.} Let us prove that the convergence \eqref{Eq : asymptotic est proba} holds in $\mathbf{L}^p$. It is sufficient to show
\begin{equation} \label{Eq : suffi for cv Lp}
	\forall p \ge 1, \quad \sup_{0<\ve\le 1} \E\left[\abs[\big]{\ve^{-1}(\hat{\theta}_\ve - \theta^\star)}^p\right]<\infty.
\end{equation}
Using that $\mathcal{J}(\theta^\star)$ is {\revarn symmetric positive definite,} by Assumption \ref{Ass : positive matrix}, we have $\mathcal{J}(\theta^\star) > \eta \text{Id}$ for some $\eta >0$ and $\text{Id}$ is the identity matrix of size $d_\Theta$. Now let $A_\ve =\{ \norm{
	\int_0^1 D^2_{\theta\theta} Q_\ve(\tilde{\theta}_\ve(r))dr-\mathcal{J}(\theta^\star)}_{H.S.} \le \eta/2\}$ where $\norm{Q}_{H.S}=Tr(QQ^*)^{1/2}$ is the Hilbert-Schmidt norm of matrices. On $A_\ve$, we have $\int_0^1D^2_{\theta\theta} Q_\ve(\tilde{\theta}_\ve(r))dr \ge \frac{\eta}{2} \text{Id }$, and $\int_0^1D^2_{\theta\theta} Q_\ve(\tilde{\theta}_\ve(r))dr$ is invertible. Also, we denote $B_\ve=\{ \abs{\hat{\theta}_\ve-\theta^\star}<\eta'\}$ where  $\eta'$ is such that the ball with radius $\eta'$ and centred at $\theta^\star$ is included in $\overset{\circ}{\Theta}$.
The representation \eqref{Eq : fond CLT est} allows us to write
\begin{equation*}
	\ve^{-1}(\hat{\theta}_\ve-\theta^\star)=
	\ve^{-1}(\hat{\theta}_\ve-\theta^\star) \one_{\{A_\ve^c \cup B_\ve^c \}} - \Bigl(\int_0^1D^2_{\theta\theta} Q_\ve(\tilde{\theta}_\ve(r))dr\Bigr)^{-1} \times \frac{\nabla_\theta Q_\ve(\theta^\star)}{\ve}
	\one_{\{A_\ve\cap B_\ve\}}
\end{equation*}
Using that $\Theta$ is a compact set, and the boundedness of $\bigl(\int_0^1 D^2_{\theta\theta} Q_\ve(\tilde{\theta}_\ve(r))dr\bigr)^{-1}$ on $A_\ve$, it yields
\begin{equation*}
\abs[\big]{\ve^{-1}(\hat{\theta}_\ve-\theta^\star)}\le \frac{c}{\ve}  \one_{\{A_\ve^c\cup B_\ve^c\}} + c \abs*{\frac{\nabla_\theta Q_\ve(\theta^\star)}{\ve}}.
\end{equation*}
Now Lemma \ref{lem :  cv grad Q} implies $\E\big[\abs{\frac{\nabla_\theta Q_\ve(\theta^\star)}{\ve}}^p\big] \le c(p)$, and we deduce
\begin{equation}\label{Eq : proof TLC majo mom}
\E \left[\abs[\big]{\ve^{-1}(\hat{\theta}_\ve-\theta^\star)}^p\right]	\le c(p) \ve^{-p}  \P( A_\ve^c ) +c(p)\ve^{-p}  \P( B_\ve^c ) + c(p).
\end{equation}
It remains to upper bound $\P( A_\ve^c )$ and $\P( B_\ve^c )$. We write for any $q\ge 1$,
\begin{align*}
	\P( A_\ve^c ) &= \P\left( \norm[\big]{\int_0^1D^2_{\theta\theta} Q_\ve(\tilde{\theta}_\ve(r))dr-\mathcal{J}(\theta^\star)}_{H.S.} > \eta/2\right)
	\\
	&\le \E\left[  \norm[\big]{\int_0^1D^2_{\theta\theta} Q_\ve(\tilde{\theta}_\ve(r))dr-\mathcal{J}(\theta^\star)}_{H.S.}^q \right] (2/\eta)^q.
\end{align*}
As we have $D^2_{\theta\theta}Q_\ve(\theta)=\mathcal{J}(\theta)+\mathcal{E}(\theta)$, where the matrix $\mathcal{E}$ is defined by \eqref{Eq : def mathcal E in CLT}.
Using that $\theta \mapsto \mathcal{J}(\theta)$ is $\mathcal{C}^1$ with bounded derivatives, \eqref{Eq : Lip part cal E}, and 
$\sup_{s,\theta} \abs{\frac{\partial^2}{\partial \theta_u\partial \theta_v}\left(b(X^0_s({\theta}),{\theta})\right)} \le c$ we have
\begin{equation*}
	\norm[\big]{\int_0^1D^2_{\theta\theta} Q_\ve(\tilde{\theta}_\ve(r))dr-\mathcal{J}(\theta^\star)}_{H.S.}
		\le c \abs{\hat{\theta}_\ve 	-\theta^\star} + c\ve \int_0^T \abs{\int_0^t K(t-s)a(X^\ve_s)dB_s}dt.
\end{equation*}
From \eqref{Eq : moment Volterra}, we deduce
\begin{equation*}
\E\left[	\norm[\big]{\int_0^1 D^2_{\theta\theta} Q_\ve(\tilde{\theta}_\ve(r))dr-\mathcal{J}(\theta^\star)}_{H.S.}^q \right]
	\le c \E[\abs{\hat{\theta}_\ve-\theta^\star}^q] + c\ve^q.
\end{equation*}
{\revarn By Lemma \ref{lem : mino Q}, under Assumption \ref{Ass : identif_strong}[$\rho$], we know that 
\ref{Ass : identif_strong_contrast}[$\rho'$] holds with some $\rho'>0$. Consequently, we can use \eqref{Eq : consistency Lp}.} %As $\tilde{\theta}_\ve$ is in the segment $[\hat{\theta}_\ve,\theta^\star]$, 
We deduce
\begin{equation*}
	\E\left[	\norm[\big]{\int_0^1 D^2_{\theta\theta} Q_\ve(\tilde{\theta}_\ve(r))dr-\mathcal{J}(\theta^\star)}_{H.S.}^q \right]
	\le c(q) [\ve^{\revarn  q/\rho'} + \ve^q],
\end{equation*}
{\revarn where $\rho'>0$} is fixed and $q\ge 1$ arbitrary. 
 It gives 
 \begin{equation}\label{Eq :  upper P Ave}
 	\P(A_\ve^c) \le c(q) [\ve^{\revarn  q/\rho'} + \ve^q] (2/\eta)^q.
 \end{equation}
 Now, $	\P(B_\ve^c)={\revarn \P}(\abs{\hat{\theta}_\ve-\theta^\star}\ge\eta') 
 \le \frac{{\revarn \E \big(}\abs{\hat{\theta}_\ve-\theta^\star}^q{\revarn \big)}}{\eta'^q}$ for any $q \ge 1$. Using \eqref{Eq : consistency Lp}, it gives $	\P(B_\ve^c) \le c(q) \frac{\ve^{\revarn  q/\rho'}}{\eta'^q}$. Collecting this with \eqref{Eq :  upper P Ave} and \eqref{Eq : proof TLC majo mom} entails
\begin{equation*}
	\E \left[\abs{\ve^{-1}(\hat{\theta}_\ve-\theta^\star)}^p\right]	\le c(p) \ve^{-p}  c(q) [\ve^{\revarn  q/\rho'} + \ve^q] (2/\eta)^q  + c(p) 
	\ve^{-p}  c(q) \ve^{\revarn  q/\rho'}  (1/\eta')^q
	+c(p).
\end{equation*}
By choosing $q$ with {\revarn $ q/\rho' \ge p$,} we deduce that the right-hand side in the equation above is bounded independently of $\ve\in (0,1]$. As a result \eqref{Eq : suffi for cv Lp} is proved.
\end{proof}
	\begin{remark}
	Whereas the limiting distribution of the estimator is given by the explicit {\revarn random} variable \eqref{Eq : asymptotic est proba}, its expression is cumbersome in most models. In particular, the dependence of the asymptotic variance of the estimator {\revarn on the parameters or on the time horizon} $T$
	%as a function of the parameters or of the time span $T$ 
	is not obvious on this formula.  In the next section, we provide a numerical assessment of the asymptotic variance in a specific model. 
	
	{\revarn The reference \cite{kutoyantsIdentificationDynamicalSystems1994} establishes a CLT in a situation of a constant volatility coefficient and a non-singular kernel. It can be verified that the variance of the right-hand side of \eqref{Eq : asymptotic est proba} coincides with the one given in Theorem 7.5 of \cite{kutoyantsIdentificationDynamicalSystems1994}.}
\end{remark}
\section{Numerical simulations}\label{S : num sim}
To assess the {\revarn performance} of the estimation procedure in practice, we consider a Monte Carlo experiment with 500 replications and for different values of $\ve$. We use a Volterra Ornstein-Uhlenbeck process,  
\begin{equation*}%\label{Eq : Volterra SDE}
	X^\varepsilon_t=X^\varepsilon_0+\ve \int_0^t K(t-s) dB_s
	+ \int_0^t K(t-s) (\theta_0 X_s^\ve+\theta_1) ds,
\end{equation*}
{\revK with $K=K_0$.}
As it is impossible to continuously observe the path $(X^\ve_t)_{t\in [0,T]}$, the contrast function \eqref{Eq : contrast TFE} is replaced by a finite sum 
	$Q_{\ve,\Delta}(\theta)=\sum_{i=0}^{\lfloor T/\Delta \rfloor} \abs{X^\ve_{i\Delta}-X^0_{i\Delta}(\theta)}^2$ based on the observations of the sampling $(X^\ve_{i\Delta})_{i=0,\dots,\lfloor T/\Delta \rfloor }$. The solution of \eqref{Eq : Volterra no noise} is computed numerically using an Euler scheme with mesh $\delta=\Delta/50$ much finer than the observation sampling. The true parameter values  %are $\alpha=0.3$,
	are  $(\theta_0,\theta_1)=(-1,1)$.
	
	{\revarn In the case where $\alpha=0.3$, Tables \ref{T : Num T 1} and \ref{T : Num T 10} report} the empirical means, and the empirical standard deviations rescaled by the factor $\ve^{-1}$. The values of $\ve$ range from $1/2$ to $1/100$ while $\Delta$ ranges from $1/5$ to $1/100$. The estimator {\revarn performs} well in this example as soon as $\epsilon \le 1/10$,  and can be {\revarn highly} biased if $\ve$ is not small enough.  On the other hand, the quality of estimation remains good even for large values of $\Delta$.  It is {\revarn noteworthy} that the estimator has a {\revarn smaller variance for $T=10$ than for $T=1$.} However, if we increase $T$ further, we do not have any additional improvement on the variance. For instance, if $T=50$, $\ve=1/100$, $\Delta=1/100$, the rescaled empirical standard deviation is
	$(2.6, 2.5)$. Consequently, {\revarn despite the} fact that we are estimating drift parameters, the asymptotic variance {\revarn does} not seem to 
	{\revarn converge} to $0$ as $T \to \infty$. 
	
	{\revarn In Tables \ref{T : Num T 1_alpha_small} and \ref{T : Num T 10_alpha_small}, we display the results for $\alpha=0.1$. The standard deviation of the estimator is clearly larger for this smaller value of $\alpha$ than for $\alpha=0.3$.}	

\section{Appendix}\label{S: Appendix}
%{\arnaud{Just for self-contained, to be removed if we have references for the smoothness of the SDE wrt $\theta$}}
	Let $\mathfrak{b} : [0,T]\times\mathbb{R}^q \times \Theta \to \mathbb{R}^q $ {\revarn be} such that for all $t\in[0,T]$, $(x,\theta)\mapsto \mathfrak{b}(t,x,\theta)$ is $\mathcal{C}^1(\mathbb{R}^q\times\overset{\circ}{\Theta})$ and the functions $\nabla_x \mathfrak{b}(t,x,\theta)$ and $\nabla_\theta \mathfrak{b}(t,x,\theta)$ are $\mathcal{C}^0([0,T]\times\mathbb{R}^q\times\overset{\circ}{
		\Theta})$. Moreover, we assume that for some $c>0$.
\begin{equation} \label{Eq : Appendix unif poly}
	\sup_{t \in [0,T]; \theta \in \overset{\circ}{\Theta}} \abs{\mathfrak{b}(t,x,\theta)}+ \abs{\nabla_x \mathfrak{b}(t,x,\theta)} + \abs{\nabla_\theta\mathfrak{b}(t,x,\theta)} \le c (1+ \abs{x}^c) .
\end{equation}
We define $(\mathcal{X}_t(\theta))_t$ and $(\mathcal{Y}_t(\theta))_t$ {\revarn as the solutions to} the following two deterministic Volterra equations 
\begin{align}\label{Eq : Appendix Volterra}
	&\mathcal{X}_t(\theta)=x_0 + \int_0^t K(t-s) \mathfrak{b}(s,\mathcal{X}_s(\theta),\theta) ds, \quad x_0 \in \mathbb{R}^q {\revarn,} \\
\label{Eq : Appendix Volterra derivative}
	&\mathcal{Y}_t(\theta)=\int_0^t K(t-s)\big[\nabla_x \mathfrak{b}(s,\mathcal{X}_s(\theta),\theta) \mathcal{Y}_s(\theta) + 
	\nabla_\theta \mathfrak{b} (s,\mathcal{X}_s(\theta),\theta)\big]ds{\revarn .}
\end{align}
We know from Theorem 1.1. in \cite{wangExistenceUniquenessSolutions2008} that the solutions of these equations exist. 
\begin{lemma} \label{lem : X C1}
The function $\theta\mapsto\mathcal{X}(\theta)$ is $\mathcal{C}^1(\overset{\circ}{\Theta},\mathbb{R}^q)$, and we have $\nabla_\theta \mathcal{X}_t(\theta)=\mathcal{Y}_t(\theta)$.
\end{lemma}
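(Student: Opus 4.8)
The plan is to derive $\mathcal{C}^1$-regularity of $\theta\mapsto\mathcal{X}(\theta)$ by a short bootstrap, each step of which is driven by the same generalized Gronwall inequality for fractional kernels (Corollary~2 in \cite{yeGeneralizedGronwallInequality2007}) already used for Lemma~\ref{lem : maj Lp diff Xve X0}, together with $K\in\mathbf{L}^1([0,T])$ from \eqref{Eq : Kernel L1 L2 bound}. First I would record the a priori bounds: exactly as for \eqref{Eq : unif bound on X^0}, the growth condition \eqref{Eq : Appendix unif poly} and a Gronwall argument give $\sup_{\theta\in\overset{\circ}{\Theta}}\sup_{t\in[0,T]}\big(\abs{\mathcal{X}_t(\theta)}+\abs{\mathcal{Y}_t(\theta)}\big)\le C$, so all the values $\mathcal{X}_s(\theta)$ lie in one fixed compact set, on which $\nabla_x\mathfrak{b}$ and $\nabla_\theta\mathfrak{b}$ are bounded and uniformly continuous, with some modulus $\rho$. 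Next I would show $\theta\mapsto\mathcal{X}(\theta)$ is Lipschitz for the sup-norm on $[0,T]$: writing $D_t=\mathcal{X}_t(\theta')-\mathcal{X}_t(\theta)$ and inserting the intermediate term $\mathfrak{b}(s,\mathcal{X}_s(\theta),\theta')$ in \eqref{Eq : Appendix Volterra}, one obtains $\abs{D_t}\le C\int_0^tK(t-s)\abs{D_s}\,ds+C\abs{\theta'-\theta}\int_0^tK(t-s)\,ds$, and the fractional Gronwall inequality yields $\sup_{t\in[0,T]}\abs{D_t}\le C\abs{\theta'-\theta}$.

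The second step is to establish continuity of $\theta\mapsto\mathcal{Y}(\theta)$ for the sup-norm. Subtracting the two copies of \eqref{Eq : Appendix Volterra derivative} for $\theta'$ and $\theta$ and splitting $\nabla_x\mathfrak{b}(s,\mathcal{X}_s(\theta'),\theta')\mathcal{Y}_s(\theta')-\nabla_x\mathfrak{b}(s,\mathcal{X}_s(\theta),\theta)\mathcal{Y}_s(\theta)$ as $\nabla_x\mathfrak{b}(s,\mathcal{X}_s(\theta'),\theta')[\mathcal{Y}_s(\theta')-\mathcal{Y}_s(\theta)]+[\nabla_x\mathfrak{b}(s,\mathcal{X}_s(\theta'),\theta')-\nabla_x\mathfrak{b}(s,\mathcal{X}_s(\theta),\theta)]\mathcal{Y}_s(\theta)$, the last bracket is $\le\rho(C\abs{\theta'-\theta})$ by Step~1, and likewise for the $\nabla_\theta\mathfrak{b}$ term; since $\mathcal{Y}$ is bounded and $K\in\mathbf{L}^1$, the fractional Gronwall inequality gives $\sup_{t\in[0,T]}\abs{\mathcal{Y}_t(\theta')-\mathcal{Y}_t(\theta)}\le C\rho(C\abs{\theta'-\theta})\to0$.

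The heart of the argument is the third step, directional differentiability. Fix $\theta\in\overset{\circ}{\Theta}$ and a unit vector $e$; for $\abs{h}$ small enough that $[\theta,\theta+he]\subset\overset{\circ}{\Theta}$, set $U^h_t=\mathcal{X}_t(\theta+he)-\mathcal{X}_t(\theta)-h\,\mathcal{Y}_t(\theta)e$. Using \eqref{Eq : Appendix Volterra}, \eqref{Eq : Appendix Volterra derivative} and the first-order Taylor formula with integral remainder for $\mathfrak{b}$ along the segment joining $(\mathcal{X}_s(\theta),\theta)$ to $(\mathcal{X}_s(\theta+he),\theta+he)$, I would rewrite $U^h_t=\int_0^tK(t-s)\big[\nabla_x\mathfrak{b}(s,\mathcal{X}_s(\theta),\theta)U^h_s+r^h_s\big]\,ds$, where the remainder $r^h_s$ collects the differences of gradients along that segment and therefore satisfies $\sup_{s\in[0,T]}\abs{r^h_s}\le C\rho(C\abs{h})\,\abs{h}=o(\abs{h})$, by Step~1 (which gives $\abs{\mathcal{X}_s(\theta+he)-\mathcal{X}_s(\theta)}\le C\abs{h}$), the uniform continuity of the gradients, and the boundedness of $\mathcal{Y}$. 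Since $\nabla_x\mathfrak{b}(s,\mathcal{X}_s(\theta),\theta)$ is bounded and $\int_0^tK(t-s)\,ds\le c$ by \eqref{Eq : Kernel L1 L2 bound}, this gives $\abs{U^h_t}\le C\int_0^tK(t-s)\abs{U^h_s}\,ds+o(\abs{h})$, and the fractional Gronwall inequality delivers $\sup_{t\in[0,T]}\abs{U^h_t}\le o(\abs{h})\,E_{\alpha+1/2}(CT^{\alpha+1/2})=o(\abs{h})$. Hence the directional derivative of $\theta\mapsto\mathcal{X}_t(\theta)$ in the direction $e$ exists and equals $\mathcal{Y}_t(\theta)e$; as this holds for every $e$, the partial derivatives $\partial_{\theta_v}\mathcal{X}_t(\theta)$ are the columns of $\mathcal{Y}_t(\theta)$, which are continuous in $\theta$ by Step~2, so the standard criterion yields $\theta\mapsto\mathcal{X}(\theta)\in\mathcal{C}^1(\overset{\circ}{\Theta},\mathbb{R}^q)$ with $\nabla_\theta\mathcal{X}_t(\theta)=\mathcal{Y}_t(\theta)$.

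I expect the only genuinely delicate point to be the uniform-in-$s$ estimate $\sup_{s\in[0,T]}\abs{r^h_s}=o(\abs{h})$ in Step~3: it requires that $(\mathcal{X}_s(\theta),\theta)$ and all the interpolating points stay in one fixed compact set uniformly over $s\in[0,T]$, which is precisely what the a priori bounds of Step~1 supply. By contrast, the singularity of the kernel causes no extra difficulty, since $K\in\mathbf{L}^1([0,T])$ makes the forcing terms harmless and the fractional Gronwall inequality of \cite{yeGeneralizedGronwallInequality2007} is tailored to the kernel $u\mapsto u^{\alpha-1/2}$.
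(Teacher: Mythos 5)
Your proposal is correct and follows essentially the same route as the paper: define the first-order error $\mathcal{X}_t(\theta+h)-\mathcal{X}_t(\theta)-\mathcal{Y}_t(\theta)h$, expand $\mathfrak{b}$ by Taylor with integral remainder along the interpolating segment, control the remainder through the uniform bounds on $\mathcal{X},\mathcal{Y}$, the Lipschitz-in-$\theta$ estimate $\sup_t\abs{\mathcal{X}_t(\theta+h)-\mathcal{X}_t(\theta)}\le C\abs{h}$ and uniform continuity of the gradients on a compact, then conclude with the fractional Gronwall inequality of \cite{yeGeneralizedGronwallInequality2007}. The only differences are cosmetic (you linearize around the frozen gradient $\nabla_x\mathfrak{b}(s,\mathcal{X}_s(\theta),\theta)$ rather than the $\xi$-averaged one, and you work direction by direction), and you in fact spell out the continuity of $\theta\mapsto\mathcal{Y}(\theta)$ that the paper only sketches.
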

\begin{remark}\label{R : rem deriv}
	If $b \in \mathcal{C}^{1}_P(\mathbb{R}^d\times\overset{\circ}{\Theta},\mathbb{R}^d)$, we can apply the lemma with $\mathfrak{b}(t,x,\theta)=b(x,\theta)$ and deduce that $\theta \mapsto X^0(\theta)$ is $\mathcal{C}^1$ with a derivative solution of 
	\eqref{Eq : ODE Volterra Y}. If $b \in \mathcal{C}^{2}_P$, we can apply the lemma with $\mathfrak{b}(t,y,\theta)=\nabla_xb(X^0_t(\theta),\theta)y+
	\nabla_\theta b(X^0_t(\theta),\theta)$ and get that  $\theta \mapsto X^0_t(\theta)$ is a function of class $\mathcal{C}^2$.
\end{remark}
\begin{proof}[Proof of Lemma \ref{lem : X C1}]
	For $\theta \in \overset{\circ}{\Theta}$ and $h\in \mathbb{R}^{d_\Theta}$ with $\theta+h \in \Theta$, we define 
	$\overline{\mathcal{Y}}^h_t(\theta)= \mathcal{X}_t(\theta+h)-\mathcal{X}_t(\theta)-\mathcal{Y}_t(\theta)h$, which satisfies
	\begin{multline*}
		\overline{\mathcal{Y}}^h_t(\theta)=
		\int_0^t K(t-s) \big[  
		\mathfrak{b}(s,\mathcal{X}_s(\theta+h),\theta+h)-\mathfrak{b}(s,\mathcal{X}_s(\theta),\theta)-\\
		\nabla_x \mathfrak{b}(s,\mathcal{X}_s(\theta),\theta) \mathcal{Y}_s(\theta)h -
		\nabla_\theta \mathfrak{b} (s,\mathcal{X}_s(\theta),\theta)h\big]ds.
	\end{multline*}
By {\revarn the} fundamental theorem of calculus applied to the function $\xi\in[0,1] \mapsto \mathfrak{b} \big(s,\mathcal{X}_s(\theta) + \xi 
(\mathcal{X}_s(\theta+h)-\mathcal{X}_s(\theta)) ,\theta +\xi h \big)$, {\revarn we have }
%there exists $\xi_{s,h} \in [0,1]$ such that
\begin{multline*}
	\mathfrak{b} \big(s, \mathcal{X}_s(\theta+h),\theta+h \big)-\mathfrak{b} \big(s,\mathcal{X}_s(\theta),\theta \big)
=\\ \int_0^1 \big[
	\nabla_x \mathfrak{b}(s,\mathcal{X}^{\xi}_s(\theta),\theta+ \xi h) \big( \mathcal{X}_s(\theta+h)-\mathcal{X}_s(\theta) \big) 
	+
\nabla_\theta \mathfrak{b} (s,\mathcal{X}^{\xi}_s,\theta+ \xi h)h\big]d\xi ,
\end{multline*}
where $\mathcal{X}^{\xi}_s=\mathcal{X}_s(\theta) + \xi 
(\mathcal{X}_s(\theta+h)-\mathcal{X}_s(\theta))  $.
We deduce that
\begin{equation}\label{Eq : Appendix Gronwall barY}
	\overline{\mathcal{Y}}^h_t(\theta)=
\int_0^t K(t-s)  \int_0^1 \nabla_x \mathfrak{b}(s,\mathcal{X}^{\xi}_s(\theta),\theta+\xi h) \overline{\mathcal{Y}}^h_s(\theta) d\xi ds +  h \times \mathcal{R}_t(\theta,h)
\end{equation}
	where
	\begin{multline*}
		\mathcal{R}_t(\theta,	h)=
		\int_0^t K(t-s)\int_0^1\big[ \nabla_x \mathfrak{b}(s,\mathcal{X}^{\xi}_s(\theta),\theta+\xi h) - \nabla_x \mathfrak{b}(s,\mathcal{X}_s(\theta),\theta) \big]d\xi \mathcal{Y}_s(\theta)ds +
		\\
		\int_0^t K(t-s)\int_0^1\big[ \nabla_\theta \mathfrak{b}(s,\mathcal{X}^{\xi}_s(\theta),\theta+\xi h) - \nabla_\theta \mathfrak{b}(s,\mathcal{X}_s(\theta),\theta) \big] d\xi ds .
		\end{multline*}
From Lemma 2.2 in \cite{wangExistenceUniquenessSolutions2008}, we know that $\sup_{t\in[0,T]} \abs{\mathcal{X}_t(\theta) }\le c$, and the inspection of the proof shows that constant $c$ depends on the coefficients of the differential equation defining $\mathcal{X}_t(\theta)$, and is independent of $\theta$ under the condition \eqref{Eq : Appendix unif poly}. Analogously,  $\sup_{s\in[0,T],\theta\in \overset{\circ}{\Theta}} \abs{\mathcal{Y}_t(\theta)} \le c$.
% and we deduce 
%$\sup_{s\in[0,T],\theta\in \Theta, \theta+h \in \Theta} \abs{\overline{\mathcal{Y}}_t(\theta)} \le c$. 
Let us denote
\begin{multline*}
\epsilon(\theta,h) =\sup_{s \in [0,T], \xi \in [0,1]}
\abs{\nabla_x \mathfrak{b}(s,\mathcal{X}_s(\theta) + \xi (\mathcal{X}_s(\theta+h)-X_s(\theta) ),\theta+\xi h) - \nabla_x \mathfrak{b}(s,\mathcal{X}_s(\theta),\theta) }
\\+ \abs{\nabla_\theta \mathfrak{b}(s,\mathcal{X}_s(\theta) + \xi (\mathcal{X}_s(\theta+h)-\mathcal{X}_s(\theta) ),\theta+\xi h) - \nabla_\theta \mathfrak{b}(s,X_s(\theta),\theta) }.
\end{multline*}
Using that $K\in \mathbf{L}^1([0,T])$ and the boundedness of $\mathcal{Y}_s(\theta)$, we have $\abs{\mathcal{R}_t(h)} \le  c \epsilon(\theta,h)$ for all $t \in [0,T]$. The equation \eqref{Eq : Appendix Gronwall barY} implies, using \eqref{Eq : Appendix unif poly} for $\nabla_x \mathfrak{b}$,
\begin{equation*}
	\abs*{\frac{\overline{\mathcal{Y}}^h_t(\theta)}{h}}\le
	c \int_0^t {\revK\abs*{K(t-s)}}	\abs*{\frac{\overline{\mathcal{Y}}^h_s(\theta)}{h}} ds + c \epsilon(\theta,h).
\end{equation*}
We use {\revK \eqref{Eq : comp K K0} and Gronwall's lemma} (Corollary 2 in \cite{yeGeneralizedGronwallInequality2007}) to deduce that for some $c >0$ and all $t\in [0,T]$,
\begin{equation*}
\abs*{\frac{\overline{\mathcal{Y}}^h_t(\theta)}{h}} \le c \epsilon(\theta,h).
\end{equation*}
Now, from the definition of $\overline{\mathcal{Y}}^h_t(\theta)$, the fact that $\nabla_\theta \mathcal{X}_t(\theta)= \mathcal{Y}_t(\theta)$ 
{\revarn will} be proved if we show that 
$\epsilon(\theta,h) \xrightarrow{h\to0}0$. Since $\nabla_b \mathfrak{b}$ and $\nabla_\theta \mathfrak{b}$ are continuous it is sufficient to show
\begin{equation}\label{EQ : X ctn in theta}
	\sup_{s\in [0,T]} \abs*{ \mathcal{X}_t(\theta+h)- \mathcal{X}_t(\theta)} \xrightarrow{h\to0}0.
\end{equation}
However, using the boundedness on {\revarn a} compact set of $\nabla_b \mathfrak{b}$ and $\nabla_x \mathfrak{b}$, it is possible to get from
\eqref{Eq : Appendix Volterra} that
\begin{multline*}
	\abs{\mathcal{X}_t(\theta+h)-\mathcal{X}_t(\theta)}\le 
	c\int_0^t {\revK\abs*{K(t-s)}}\abs{\mathcal{X}_s(\theta+h)-\mathcal{X}_t(\theta)} ds + c\int_0^t {\revK\abs*{K(t-s)}} h ds
	\\
\le	c\int_0^t {\revK\abs*{K(t-s)}}\abs{\mathcal{X}_s(\theta+h)-\mathcal{X}_t(\theta)} ds + c h .
\end{multline*}
The Corollary 2 in \cite{yeGeneralizedGronwallInequality2007} {\revK with \eqref{Eq : comp K K0}} yields $\sup_{t\in[0,T]} 	\abs{\mathcal{X}_t(\theta+h)-\mathcal{X}_t(\theta)} \le c h$.
 Thus \eqref{EQ : X ctn in theta} is proved and in turn we deduce  $\nabla_\theta \mathcal{X}_t(\theta)= \mathcal{Y}_t(\theta)$.

To finish the proof of the lemma, we have to show the continuity of $\theta \mapsto  \mathcal{Y}_t(\theta)$. We omit the details on the proof of this continuity, which relies on the continuity of the coefficients of the Volterra equation  \eqref{Eq : Appendix Volterra derivative}, and on an application of the Volterra Gronwall's inequality.
\end{proof}
{\revarn 
We conclude this appendix by a proof of the Proposition \ref{prop : grad b invert A3}.
\begin{proof}[Proof of Proposition \ref{prop : grad b invert A3}]
	By contradiction, assume that \ref{Ass : positive matrix} does not hold. Since the symmetric matrix $\mathcal{J}(\theta^*)$ is positive semidefinite, 
	this implies that % By contradiction, assume that
	$\det \left(\mathcal{J}(\theta^\star)\right)=0$.
	Hence, there exists $h\in\mathbb{R}^{d_\Theta} \setminus \{0\}$ such that $h^* \mathcal{J}(\theta^\star) h=0$. By \eqref{Eq : def cal J}, it follows  that
	\begin{equation*}
		0=\int_0^T \abs*{ \sum_{u=1}^{d_\Theta} h_u \int_0^t  K(t-s) 
			\frac{\partial}{{\revarn\partial}\theta_u} (b(X^0_s(\theta^\star),\theta^\star))
			ds }^2 dt.
	\end{equation*}
	If we define the $\mathbb{R}^d$ valued function $g$, by $g(s)=\sum_{u=1}^{d_\Theta} h_u 	\frac{\partial}{{\revarn\partial}\theta_u} (b(X^0_s(\theta^\star),\theta^\star))$, then we deduce that
	for all $t\in[0,T]$, $\int_0^t K(t-s)g(s)ds=0$. It is $K\star g(t)=0$ for all $t\in[0,T]$. By convolving with the kernel $L$, exactly as in the proof of Lemma \ref{lem : mino Q} 1), we deduce that $g(t)=0$ for all $t\in[0,T]$. Using the expression \eqref{Eq : partial b theta explicit} together with the fact that $Y^0_0(\theta^\star)=0$, we obtain
	\begin{equation*}
		0=g(0)=\sum_{u=1}^{d_\Theta} h_u \nabla_\theta b(x_0,\theta^\star)_{\cdot,u}
		= \nabla_\theta b(x_0,\theta^\star) h.
	\end{equation*}
Taking the square, this contradicts the assumption that $\nabla_\theta b(x_0,\theta^\star)^*\nabla_\theta b(x_0,\theta^\star)$ is an invertible matrix.	
\end{proof}
}

%
%Bilbio
%
\printbibliography

\begin{table}[p] \caption{$\alpha=0.3$, $(\theta_0,\theta_1)=(-1,1)$, $T=1$} \label{T : Num T 1}
	\centering
	\begin{tblr}{
			cell{2}{1} = {r=2}{},
			cell{4}{1} = {r=2}{},
			cell{6}{1} = {r=2}{},
			cell{8}{1} = {r=2}{},
			vline{1-7} = {1}{},
			vline{1-7} = {2,4,6,8}{},
			vline{1-7} = {3,5,7,9}{},
			hline{1,2,4,6,8,10} = {-}{},
			%cell{8-9}{6}={teal7}
		}
		\backslashbox{$\Delta$}{$\ve$}	&  &1/2 & 1/10 & 1/20 & 1/100 \\
		1/5   & mean      & (-2,39, 2.01)    &  (-1.11, 1.06)   &  (-1.01, 1.01)    &   (-1.00, 1.00)    \\
		&  resc. std.   &  (10, 4.8)   & (8.8, 4.0)    &   (7.3, 3.3)   &  (7.2, 3.3)    \\
		1/10  &  mean     &   (-2.32, 1.81)  &  (-1.08, 1.05)   &  (-1.01, 1.01)    &  
		(-1.00, 1.00)   \\
		&  resc. std.   &   (10, 4.8)  &   (6.6, 3.3)   &  (6.4, 3.2)   &   (6.2, 3.0)   \\
		1/20  &mean       &   (-2.29, 1.73)  &  (-1.06, 1.04)   & (-1.02, 1.01)     &      (-1.00, 1.00)\\
		&  resc. std.   &   (8.0, 3.9)  &   (6.5, 3.2)  &   (6.2, 3.1)   & (6.1, 3.0)     \\
		1/100 & mean      &  (-2.17, 1.75)   &  (-1.07, 1.04)   &  (-1.02, 1.01)     &    (-1.00, 1.00)  \\
		& resc. std.    &  (7.9, 3.9)   & (6.5, 3.2)     &  (6.0, 3.0)   &   (5.9, 3.0)
	\end{tblr}
\end{table}

\begin{table}[p]\caption{$\alpha=0.3$, $(\theta_0,\theta_1)=(-1,1)$, $T=10$} \label{T : Num T 10}
	\centering
	\begin{tblr}{
			cell{2}{1} = {r=2}{},
			cell{4}{1} = {r=2}{},
			cell{6}{1} = {r=2}{},
			cell{8}{1} = {r=2}{},
			vline{1-7} = {1}{},
			vline{1-7} = {2,4,6,8}{},
			vline{1-7} = {3,5,7,9}{},
			hline{1,2,4,6,8,10} = {-}{},	
			%cell{8-9}{6}={teal7},
		}
		\backslashbox{$\Delta$}{$\ve$}	&  &1/2 & 1/10 & 1/20 & 1/100 \\
		1/5   & mean      &  (-1.92, 1.80)    &  (-1.03, 1.03)   &  (-1.01, 1.01)    &  (-1.00, 1.00)     \\
		&  resc. std.   &  (5.1, 4.4)    &  (2.4, 2.1)   &  (2.3, 2.0)    &  (2.3, 2.0)    \\
		1/10  &  mean     &  (-1.84, 1.72)    & (-1.03, 1.03)    & (-1.01, 1.01)     & 
		(-1.00, 1.00)     \\
		&  resc. std.   &   (4.5, 3.7)  & (2.4, 2.1)    & (2.3, 2.0)     &  (2.3, 2.0)    \\
		1/20  &mean       &   (-1.80, 1.68)  & (-1.02, 1.02)    &  (-1.01, 1.01)    &  (-1.00, 1.00)    \\
		&  resc. std.   &  (4.4, 3.6)   &   (2.3, 2.0)    &  (2.3, 2.0)    &   (2.3, 2.0)  \\
		1/100 & mean      &  (-1.71, 1.61)   &  (-1.03, 1.02)   &  (-1.01, 1.01)    & (-1.00, 1.00)     \\
		& resc. std.    &  (4.4, 3.6)   &  (2.3, 2.1)   &  (2.3, 2.0)    &    (2.3, 2.0)  
	\end{tblr}
\end{table}

\begin{table}[p] \caption{$\alpha=0.1$, $(\theta_0,\theta_1)=(-1,1)$, $T=1$} \label{T : Num T 1_alpha_small}
	\centering
	\begin{tblr}{
			cell{2}{1} = {r=2}{},
			cell{4}{1} = {r=2}{},
			cell{6}{1} = {r=2}{},
			cell{8}{1} = {r=2}{},
			vline{1-7} = {1}{},
			vline{1-7} = {2,4,6,8}{},
			vline{1-7} = {3,5,7,9}{},
			hline{1,2,4,6,8,10} = {-}{},
			%cell{8-9}{6}={teal7}
		}
		\backslashbox{$\Delta$}{$\ve$}	&  &1/2 & 1/10 & 1/20 & 1/100 \\
		1/5   & mean      & (-2,86, 2.59)    &  (-2.15, 1.76)   &  (-1.25, 1.17)    &   (-1.01, 1.00)    \\
		&  resc. std.   &  (9.3, 6.7)   & (28, 18)    &   (22, 14)   &  (11, 6.6)    \\
		1/10  &  mean     &   (-3.17, 2.60)  &  (-1.49, 1.32)   &  (-1.08, 1.06)    &  		(-1.00, 1.00)   \\
		&  resc. std.   &   (9.2, 6.6)  &   (18, 11)   &  (10, 6.4)   &   (8.2, 5.2)   \\
		1/20  &mean       &   (-2.98, 2.41)  &  (-1.30, 1.20)   & (-1.05, 1.04)     &      (-1.00, 1.00)\\
		&  resc. std.   &   (8.7, 6.0)  &   (12, 7.9)  &   (8.0, 5.1)   & (7.5, 4.9)     \\
		1/100 & mean      &  (-2.84, 2.25)   &  (-1.21, 1.14)   &  (-1.05, 1.04)     &    (-1.00, 1.00)  \\
		& resc. std.    &  (8.2, 5.6)   & (9.9, 6.1)     &  (7.5, 4.9)   &   (7.1, 4.6)
	\end{tblr}
\end{table}

\begin{table}[p]\caption{$\alpha=0.1$, $(\theta_0,\theta_1)=(-1,1)$, $T=10$} \label{T : Num T 10_alpha_small}
	\centering
	\begin{tblr}{
			cell{2}{1} = {r=2}{},
			cell{4}{1} = {r=2}{},
			cell{6}{1} = {r=2}{},
			cell{8}{1} = {r=2}{},
			vline{1-7} = {1}{},
			vline{1-7} = {2,4,6,8}{},
			vline{1-7} = {3,5,7,9}{},
			hline{1,2,4,6,8,10} = {-}{},	
			%cell{8-9}{6}={teal7},
		}
		\backslashbox{$\Delta$}{$\ve$}	&  &1/2 & 1/10 & 1/20 & 1/100 \\
		1/5   & mean      &  (-3.19, 2.93)    &  (-1.14, 1.12)   &  (-1.03, 1.03)    &  (-1.00, 1.00)     \\
		&  resc. std.   &  (7.6, 6.8)    &  (7.4, 6.6)   &  (4.2, 3.7)    &  (3.8, 3.4)    \\
		1/10  &  mean     &  (-3.07, 2.80)    & (-1.09, 1.08)    & (-1.03, 1.02)     & 	(-1.00, 1.00)     \\
		&  resc. std.   &   (7.6, 6.7)  & (4.6, 4.1)    & (3.7, 3.3)     &  (3.5, 3.1)    \\
		1/20  &mean       &   (-2.90, 2.70)  & (-1.08, 1.07)    &  (-1.02, 1.02)    &  (-1.00, 1.00)    \\
		&  resc. std.   &  (7.3, 6.5)   &   (4.1, 3.6)    &  (3.5, 3.1)    &   (3.4, 3.0)  \\
		1/100 & mean      &  (-2.62, 2.41)   &  (-1.08, 1.06)   &  (-1.02, 1.02)    & (-1.00, 1.00)     \\
		& resc. std.    &  (6.7, 5.9)   &  (4.0, 3.6)   &  (3.6, 3.2)    &    (3.4, 3.0)  
	\end{tblr}
\end{table}

\end{document}